\documentclass[11pt,letterpaper,reqno]{amsart}
\usepackage{tikz}
\usetikzlibrary{positioning, shapes.geometric, arrows.meta, calc}
\usepackage{amssymb}
\usepackage{amsmath}
\usepackage{amsthm}
\usepackage{amsfonts}
\usepackage{bbm}
\usepackage{enumitem}
\usepackage{pgfplots}
\pgfplotsset{compat=1.18}
\usepackage{booktabs}
\usepackage{graphicx}
\usepackage[T1]{fontenc}
\usepackage{doi}
\usepackage{float}
\usepackage{bookmark}
\usepackage{hyperref}
\hypersetup{pdfstartview={FitH}}

\newtheorem{theorem}{Theorem}[section]
\newtheorem{lemma}[theorem]{Lemma}

\newtheorem{problem}[theorem]{Problem}
\newtheorem{conjecture}[theorem]{Conjecture}
\theoremstyle{definition}

\newtheorem{remark}[theorem]{Remark}

\numberwithin{equation}{section}

\newcommand{\C}{\mathbb{C}}

\newcommand{\D}{\mathbb{D}}
\newcommand{\whC}{\widehat{\mathbb{C}}}

\newcommand{\capf}{\operatorname{cap}}
\newcommand{\DR}{\mathbb{D}_R}

\begin{document}

\title{On Fuchs's additive intersection problem for the hyperbolic metric}

\author[Y.~He]{Yixin He}
\address{School of Mathematical Sciences, Fudan University, Shanghai 200433, P. R. China}
\email{hyx717math@163.com}

\author[Q.~Tang]{Quanyu Tang}
\address{School of Mathematics and Statistics, Xi'an Jiaotong University, Xi'an 710049, P. R. China}
\email{tang\_quanyu@163.com, tangquanyu@stu.xjtu.edu.cn}

\subjclass[2020]{Primary 30F45; Secondary 30C20, 31A15}

\keywords{hyperbolic metric, logarithmic capacity}

\begin{abstract}
For hyperbolic domains $D_1,D_2\subset \{z\in\mathbb C:|z|<R\}$ and $z\in D_1\cap D_2$, we consider the ratio
$$
\frac{\lambda_{D_1\cap D_2}(z)}
{\lambda_{D_1}(z)+\lambda_{D_2}(z)}.
$$
We solve a problem of W.\,H.\,J.~Fuchs by proving that the supremum of this ratio is $+\infty$ when $D_1$ and $D_2$ range over all hyperbolic domains. If $D_1$ and $D_2$ are further assumed to be simply connected, then the supremum is $1$. We also show that the infimum of this ratio is $\frac12$ in both settings, and that the value $\frac12$ is attained if and only if $D_1=D_2$.
\end{abstract}

\maketitle

\section{Introduction}\label{sec:introduction}

The hyperbolic metric is one of the basic conformal invariants of a plane domain. It encodes the intrinsic geometry of the domain and plays a central role in geometric function theory; see, for instance, \cite{Ahlfors1973,BeardonMinda2007}. 

Recall that a domain \(X\) in the Riemann sphere \(\widehat{\C}\) is called
\emph{hyperbolic} if it carries the hyperbolic metric, or equivalently, if
\(\widehat{\C}\setminus X\) contains at least three distinct points. In
particular, if \(G\subset \C\) is a plane domain, then \(G\) is hyperbolic if
and only if \(\C\setminus G\) contains at least two points. For such a domain
\(G\), we write \(\lambda_G(z)\,|dz|\) for its hyperbolic metric and
\(\lambda_G\) for the corresponding density.

In \emph{Research Problems in Function Theory}, Hayman and Lingham recorded the following problem of W.\,H.\,J.~Fuchs~\cite[Problem~7.73]{HaymanLingham2019}.

\begin{problem}[Fuchs]\label{prob:fuchs}
Let \(D_1\) and \(D_2\) be domains contained in \(\{z\in\mathbb C:|z|<R\}\), and let
\(\lambda_1(z)\,|dz|\) and \(\lambda_2(z)\,|dz|\) denote their hyperbolic metrics.
What is the smallest constant \(A=A(R)\) such that the hyperbolic metric
\(\lambda(z)\,|dz|\) of \(D_1\cap D_2\) satisfies
\[
\lambda(z) < A\bigl(\lambda_1(z)+\lambda_2(z)\bigr)?
\]
\end{problem}

Hayman and Lingham \cite[pp.~185--186]{HaymanLingham2019} also emphasized both the appeal and the difficulty of this problem:
\begin{quote}
\emph{Personally, I am very fond of Problem 7.73, which is due to Fuchs. \dots\ Once again, this question is easy to state, and easy to understand, but apparently hard to make progress on. It also gives rise to many subsidiary questions: is the lower bound for \(A\) attained? If so, what can be said about the class of domains for which it is attained? Is the question easier if \(D_1\), \(D_2\) are simply connected? Is it the case that \(A\) is independent of \(R\)? Can anything similar be said about the corresponding lower bound on \(\lambda(z)\)?}
\end{quote}

Motivated by these questions, for domains \(D_1,D_2\subset \D_R:=\{z\in\C:|z|<R\}\) and a point \(z\in D_1\cap D_2\), we consider the ratio
\[
Q(D_1,D_2;z)
:=
\frac{\lambda_{D_1\cap D_2}(z)}
{\lambda_{D_1}(z)+\lambda_{D_2}(z)}.
\]
Whenever \(z\in D_1\cap D_2\) and \(D_1\cap D_2\) is not connected, the quantity \(\lambda_{D_1\cap D_2}(z)\) is understood as the hyperbolic density at \(z\) of the connected component of \(D_1\cap D_2\) containing \(z\). In particular, all inequalities involving \(\lambda_{D_1\cap D_2}(z)\) are interpreted componentwise. Moreover, we consider \(\lambda_{D_1\cap D_2}(z)\) only when the connected component containing \(z\) is hyperbolic; this assumption will always be satisfied in the situations treated below. Since every such component is contained in \(\D_R\), it is automatically hyperbolic.

Let \(\mathcal H_R\) denote the class of hyperbolic domains in \(\mathbb C\) contained in \(\D_R\), and let \(\mathcal S_R\subset \mathcal H_R\) denote the subclass of simply connected domains. We define the upper extremal constants
\[
A_{\mathrm{gen}}(R)
:=
\sup\left\{
Q(D_1,D_2;z): D_1,D_2\in\mathcal H_R,\ z\in D_1\cap D_2
\right\},
\]
and
\[
A_{\mathrm{sc}}(R)
:=
\sup\left\{
Q(D_1,D_2;z): D_1,D_2\in\mathcal S_R,\ z\in D_1\cap D_2
\right\}.
\]
Similarly, we define the lower extremal constants
\[
B_{\mathrm{gen}}(R)
:=
\inf\left\{
Q(D_1,D_2;z): D_1,D_2\in\mathcal H_R,\ z\in D_1\cap D_2
\right\},
\]
and
\[
B_{\mathrm{sc}}(R)
:=
\inf\left\{
Q(D_1,D_2;z): D_1,D_2\in\mathcal S_R,\ z\in D_1\cap D_2
\right\}.
\]

The quantities \(A_{\mathrm{gen}}(R)\) and \(A_{\mathrm{sc}}(R)\) are the optimal constants for the non-strict inequality
\[
\lambda_{D_1\cap D_2}(z)
\le
A\bigl(\lambda_{D_1}(z)+\lambda_{D_2}(z)\bigr),
\]
in the general and simply connected settings, respectively. Similarly, \(B_{\mathrm{gen}}(R)\) and \(B_{\mathrm{sc}}(R)\) are the optimal constants for the reverse non-strict inequality
\[
\lambda_{D_1\cap D_2}(z)
\ge
B\bigl(\lambda_{D_1}(z)+\lambda_{D_2}(z)\bigr).
\]

In relation to the original formulation of Hayman and Lingham \cite{HaymanLingham2019}, which uses the strict inequality \(<\), the extremal constants \(A_{\mathrm{gen}}(R)\) and \(A_{\mathrm{sc}}(R)\) should be interpreted as the corresponding threshold values. More precisely, if \(c\) denotes the supremum of the quotient \(Q(D_1,D_2;z)\) over the class under consideration, then every \(A>c\) is admissible for the strict inequality
\[
\lambda_{D_1\cap D_2}(z)
<
A\bigl(\lambda_{D_1}(z)+\lambda_{D_2}(z)\bigr).
\]
Moreover, the endpoint value \(A=c\) is admissible if and only if the supremum \(c\) is not attained. Thus the strict version of the problem is governed by the same extremal value \(c\), while the additional issue is whether the endpoint is admissible, equivalently, whether the supremum is attained.

More precisely, the problems considered here are to determine these extremal constants, to understand whether they depend on \(R\), and to decide whether the corresponding supremum or infimum is attained and, if so, to describe the associated extremal configurations.

We begin with the simplest of these questions, namely the dependence on \(R\). By the conformal invariance of the hyperbolic metric under dilations (Lemma~\ref{lem:scaling}), all four extremal constants are in fact independent of \(R\). Thus, throughout the paper, we may normalize \(R=1\). We shall use the normalization
\[
\lambda_{\D}(z)=\frac{1}{1-|z|^2}, \qquad z\in\D:=\{z\in\C:|z|<1\}.
\]
Any other standard normalization differs from this one by a universal positive constant, and therefore does not affect the extremal problems considered here.

The main purpose of this paper is to address the remaining questions, namely to determine the exact values of these extremal constants, to study the issue of attainability, and, in the case of the infimum problem, to characterize completely when equality occurs.

Our first main result solves Problem~\ref{prob:fuchs}. Specifically, we show that there is no finite universal upper bound: for every \(A>0\), there exist domains \(D_1,D_2\subset \D_R\) and a point \(z\in D_1\cap D_2\) such that
\[
\lambda_{D_1\cap D_2}(z)
>
A\bigl(\lambda_{D_1}(z)+\lambda_{D_2}(z)\bigr).
\]
Equivalently, we prove the following theorem.

\begin{theorem}\label{thm:gen-upper}
For every \(R>0\), one has
\[
A_{\mathrm{gen}}(R)=+\infty.
\]
\end{theorem}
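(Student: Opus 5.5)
The plan is to use twice-punctured disks, comparing a single puncture with a pair of punctures. A single puncture at distance $r$ from $z$ only forces $\lambda(z)\approx 1/(r\log(1/r))$. Two punctures at distance $r$ on opposite sides of $z$ force $\lambda(z)\gtrsim 1/r$, because the domain is then essentially a scaled thrice-punctured sphere. By Lemma~\ref{lem:scaling} it suffices to treat $R=1$, with the normalization $\lambda_{\D}(z)=1/(1-|z|^2)$.

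For $0<r<1/2$ put
\[
D_1=\D\setminus\{r\},\qquad D_2=\D\setminus\{-r\},\qquad D_1\cap D_2=\D\setminus\{r,-r\},
\]
and evaluate $Q$ at $z=0$. All three are hyperbolic domains contained in $\D$, and $D_1\cap D_2$ is connected.

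\emph{Lower bound for the intersection.} By the comparison principle (monotonicity of hyperbolic density under inclusion) and scaling,
\[
\lambda_{D_1\cap D_2}(0)\ \ge\ \lambda_{\C\setminus\{r,-r\}}(0)=\frac1r\,\lambda_{\C\setminus\{1,-1\}}(0)=\frac{c_0}{r},
\]
where $c_0>0$ is a universal constant. Here $\C\setminus\{\pm1\}$ is a hyperbolic domain, since its complement in $\whC$ has three points.

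\emph{Upper bound for each $\lambda_{D_j}(0)$.} Let $\varphi(w)=(w-r)/(1-rw)$, a disk automorphism. It maps $D_1$ onto the punctured disk $\D^*=\D\setminus\{0\}$ and sends $0$ to $-r$, with $|\varphi'(0)|=1-r^2$. In our normalization the punctured disk has density
\[
\lambda_{\D^*}(w)=\frac{1}{2|w|\log(1/|w|)}.
\]
This follows from the covering $\zeta\mapsto e^{\zeta}$ of the left half-plane onto $\D^*$: the half-plane density is $1/(2|\operatorname{Re}\zeta|)$, and pulling back along this covering gives the formula. Conformal invariance then gives
\[
\lambda_{D_1}(0)=\frac{1-r^2}{2r\log(1/r)},
\]
and by symmetry $\lambda_{D_2}(0)$ takes the same value.

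\emph{Conclusion.} Combining the two bounds,
\[
Q(D_1,D_2;0)\ \ge\ \frac{c_0/r}{(1-r^2)/(r\log(1/r))}=\frac{c_0\log(1/r)}{1-r^2}\longrightarrow+\infty\qquad(r\to0^+).
\]
Hence $A_{\mathrm{gen}}(1)=+\infty$, and by Lemma~\ref{lem:scaling} the same holds for every $R>0$.

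The only step needing care is the exact formula for $\lambda_{\D^*}$ and the covering computation behind it. Everything else is monotonicity and scaling. The constant $c_0$ does not need to be computed, only shown to be positive, and positivity is immediate because $\C\setminus\{\pm1\}$ is hyperbolic.
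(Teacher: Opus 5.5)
Your proposal is correct and follows essentially the same argument as the paper. The paper uses the same punctured disks $\D\setminus\{\pm a\}$ evaluated at $0$, the same lower bound via monotonicity against $\C\setminus\{\pm a\}$ plus scaling, and the same exact formula for $\lambda_{\D^*}$. The only difference is that you derive $\lambda_{\D^*}$ from the exponential covering of the left half-plane, whereas the paper cites it as standard.
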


The proof is based on the explicit family
\[
D_1^{(a)}=\D\setminus\{a\},
\qquad
D_2^{(a)}=\D\setminus\{-a\},
\qquad 0<a<1.
\]
For each punctured disk, the hyperbolic density at the origin is of order \((a\log(1/a))^{-1}\), whereas the density on the intersection admits a lower bound given by the density of \(\C\setminus\{\pm a\}\), which is of order \(a^{-1}\). Consequently, the quotient is bounded below by a positive constant multiple of \(\log(1/a)\), and in particular tends to $+\infty$.

Our second main result identifies the sharp upper extremal constant in the simply connected case.

\begin{theorem}\label{thm:sc-upper}
For every \(R>0\), one has
\[
A_{\mathrm{sc}}(R)=1.
\]
\end{theorem}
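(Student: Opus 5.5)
Two things have to be shown: for simply connected $D_1,D_2\subset\D_R$ and $z\in D_1\cap D_2$ one has $Q(D_1,D_2;z)\le 1$, and there are configurations with $Q$ arbitrarily close to $1$. By Lemma~\ref{lem:scaling} I normalize $R=1$. Let $G$ be the component of $D_1\cap D_2$ containing $z$. Since $D_1,D_2$ are simply connected, $G$ is simply connected; this is a standard planar-topology fact, proved by checking that each complementary component of $G$ in $\whC$ meets $\infty$.

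\textbf{Upper bound.} I will use the Koebe one-quarter theorem in its standard form for simply connected hyperbolic domains $\Omega$, with the normalization $\lambda_{\D}(z)=1/(1-|z|^2)$:
\[
\frac{1}{4\,\delta_\Omega(z)}\le \lambda_\Omega(z)\le \frac{1}{\delta_\Omega(z)},
\]
where $\delta_\Omega(z)$ is the Euclidean distance from $z$ to $\partial\Omega$. These constants are too crude to give the ratio $1$, so Koebe alone will not do. The argument I would try first instead uses the identity
\[
\delta_G(z)=\min\{\delta_{D_1}(z),\delta_{D_2}(z)\}.
\]
This gives $\lambda_G(z)\le 1/\delta_G(z)=\max_j 1/\delta_{D_j}(z)$. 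Say the maximum is attained at $j=1$. The crude bound is then $\lambda_G\le 4\lambda_{D_1}$, which is still not sharp.

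To get the sharp constant I would use a genuinely two-domain argument. Map $G$ conformally onto $\D$ by $\phi$ with $\phi(z)=0$. The boundary $\partial G$ consists of arcs of $\partial D_1$ and arcs of $\partial D_2$. Let $\omega_j$ be the harmonic measure of the $\partial D_j$-part, seen from $z$, so that $\omega_1+\omega_2\ge 1$. The plan is then to prove
\[
\lambda_{D_j}(z)\ge \omega_j\,\lambda_G(z).
\]
My intended route is to compare $\lambda_{D_j}$ with the density of the simply connected domain obtained from $G$ by "opening" the arcs not lying on $\partial D_j$. The quantity $\lambda_G(z)/\lambda_{D_j}(z)$ equals $1/|g_j'(0)|$, where $g_j=\psi_j\circ\phi^{-1}\colon\D\to\D$, $g_j(0)=0$, and $\psi_j$ is the Riemann map of $D_j$. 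The map $g_j$ has boundary values of modulus $1$ on a set $E_j\subset\partial\D$ of measure $2\pi\omega_j$. A Löwner/Julia-type lemma should then give $|g_j'(0)|\ge \omega_j$. One candidate mechanism is the subordination estimate via $\log|g_j(\zeta)/\zeta|$, combined with Jensen. Summing over $j=1,2$ would yield $\lambda_{D_1}(z)+\lambda_{D_2}(z)\ge(\omega_1+\omega_2)\lambda_G(z)\ge\lambda_G(z)$, which is the upper bound.

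\textbf{Main obstacle.} The inequality $|g'(0)|\ge|E|/2\pi$ for self-maps of $\D$ fixing $0$ that are unimodular on $E$ is not true as stated. The counterexample is $g(\zeta)=\zeta B(\zeta)$ with $B$ a Blaschke product, which is unimodular on the whole circle. What saves the argument is that $g_j$ is univalent. So the key step must exploit univalence, for example through a Grötzsch/extremal-length comparison, or through the fact that the preimage arcs are free boundary arcs. I expect this to be the hardest part and would first test it on the model case where $D_1,D_2$ are half-planes truncated to $\D$.

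\textbf{Sharpness.} Take $D_1=D_2=D$. This gives $Q=1/2$, so it does not reach $1$. Instead take $D_1=\D\cap\{\operatorname{Re} w>-\varepsilon\}$ and $D_2=\D\cap\{\operatorname{Re} w<\varepsilon\}$, with $z=0$. Then $G$ is a thin strip of width $2\varepsilon$, so $\lambda_G(0)\sim\pi/(4\varepsilon)$ by comparison with an infinite strip, while each $\lambda_{D_j}(0)\sim 1/(4\varepsilon)\cdot(\text{half-plane constant})$. The ratio therefore tends to a constant, and I expect that constant to compute to $1$ via the explicit half-plane density $1/(2\operatorname{dist})$ and the strip density $\pi/(4\varepsilon)$ (up to the normalization, which I would recheck). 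If the constant is not $1$, I would replace the half-planes by slit or wedge configurations whose boundary harmonic measures from $0$ split evenly, and pass to the limit.
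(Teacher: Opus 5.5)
\textbf{Upper bound.} Your strategy depends on the termwise inequality $\lambda_{D_j}(z)\ge\omega_j\,\lambda_G(z)$, i.e.\ $|g_j'(0)|\ge\omega_j$. This is false even for univalent $g_j$, so there is no lemma to find. Take $R=2$, $D_1=\D$, $D_2=\{|w|<2\}\setminus[r,2)$ and $z=0$.
\begin{itemize}
\item Then $G=\D\setminus[r,1)$ and $\partial G\cap\partial D_1=\partial\D$.
\item Here $g_1=\phi^{-1}$ is the univalent map of $\D$ onto the slit disk, with $g_1'(0)=1/\lambda_G(0)=4r/(1+r)^2$. The paper computes this via the Joukowski map.
\item The Koebe function $w/(1-w)^2$ maps $G$ onto the complement of two rays. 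This gives $\omega_1=\frac{2}{\pi}\arctan\frac{2\sqrt r}{1-r}\sim\frac{4}{\pi}\sqrt r$.
\end{itemize}
So for $j=1$ your inequality reads $1\ge\omega_1\lambda_G(0)\sim 1/(\pi\sqrt r)$. This fails for small $r$; at $r=1/100$ the right side is about $3.2$. For univalent maps, $|g'(0)|$ can be of order $\omega^2$ (Beurling), not $\omega$.

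The target bound $\lambda_G(0)\le\lambda_{D_1}(0)+\lambda_{D_2}(0)$ still holds in this example. It holds only because the $D_2$-term has slack of order $r^{-1/2}$ that absorbs the $D_1$-deficit. The inequality does not split along harmonic measure.

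The missing idea is the paper's capacity reformulation:
\begin{itemize}
\item Move $z$ to $0$ and invert by $\zeta\mapsto 1/\zeta$. Then $\lambda_{G_j}(0)=\capf(K_j)$ for continua $K_j\supset\overline{\D}$.
\item The intersection component corresponds to the unbounded component of $\whC\setminus(K_1\cup K_2)$, so $\lambda_U(0)=\capf(K_1\cup K_2)$.
\item Murai's subadditivity of capacity for intersecting continua then gives the bound at once.
\end{itemize}

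\textbf{Sharpness.} Your strip family does not reach $1$. With your own formulas, $\lambda_{D_j}(0)\sim 1/(2\varepsilon)$ and $\lambda_G(0)\sim\pi/(4\varepsilon)$. Hence $Q\to\pi/4<1$, which only shows $A_{\mathrm{sc}}\ge\pi/4$.

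Your fallback criterion, an even split of boundary harmonic measure from $0$, cannot be what matters, because the strip already has that property.

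The paper uses the collinear slit domains $\D\setminus[a,1)$ and $\D\setminus(-1,-a]$. After inversion, the Joukowski map $J(w)=w+1/w$ sends the three relevant continua to the intervals $[-2,a+a^{-1}]$, $[-(a+a^{-1}),2]$ and $[-(a+a^{-1}),a+a^{-1}]$. Their capacities are a quarter of their lengths, which gives $Q=(1+a^2)/(1+a)^2\to 1$ as $a\to 0^+$.
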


The proof of Theorem~\ref{thm:sc-upper} is based on an interpretation of the hyperbolic density at the origin in terms of logarithmic capacity for simply connected subdomains of \(\D\). After moving the base point to the origin by a disk automorphism, each simply connected domain \(G\subset \D\) gives rise, via inversion, to a \emph{continuum} \(K_G\), that is, a compact connected set, and \(\lambda_G(0)\) is expressed in terms of \(\capf(K_G)\). Thus, if \(G_1,G_2\subset \D\) are the normalized domains under consideration, then the density on the relevant component of \(G_1\cap G_2\) corresponds to \(\capf(K_{G_1} \cup K_{G_2})\). Murai's subadditivity theorem (Theorem~\ref{thm:murai}) then yields the sharp upper bound.

Finally, we determine the sharp lower extremal constant in both the general and simply connected settings and characterize the equality case for the lower bound.

\begin{theorem}\label{thm:lower}
For every \(R>0\), one has
\[
B_{\mathrm{gen}}(R)=B_{\mathrm{sc}}(R)=\frac12.
\]
Moreover, in both classes \(\mathcal H_R\) and \(\mathcal S_R\), the infimum is
attained, and equality \(Q(D_1,D_2;z)=1/2\) holds if and only if \(D_1=D_2\).
\end{theorem}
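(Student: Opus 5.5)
The plan rests on the monotonicity of the hyperbolic density under inclusion (a consequence of the Schwarz--Pick lemma). Let $z\in D_1\cap D_2$ and let $W$ be the component of $D_1\cap D_2$ containing $z$. Since $W\subset D_1$ and $W\subset D_2$, we get $\lambda_W(z)\ge \lambda_{D_1}(z)$ and $\lambda_W(z)\ge\lambda_{D_2}(z)$. Averaging the two inequalities gives
\[
\lambda_{D_1\cap D_2}(z)\ \ge\ \max\{\lambda_{D_1}(z),\lambda_{D_2}(z)\}\ \ge\ \tfrac12\bigl(\lambda_{D_1}(z)+\lambda_{D_2}(z)\bigr),
\]
so $Q(D_1,D_2;z)\ge \frac12$ in both classes. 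Conversely, taking $D_1=D_2=D$ for any $D\in\mathcal S_R\subset\mathcal H_R$ (for instance $D=\D_R$ itself) gives $D_1\cap D_2=D$ and hence $Q=\lambda_D(z)/(2\lambda_D(z))=\frac12$. The infimum therefore equals $\frac12$ in both classes and is attained. By Lemma~\ref{lem:scaling} none of this depends on $R$, though the argument above does not even need the normalization.

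For the equality case, suppose $Q(D_1,D_2;z)=\frac12$. Then both inequalities in the chain above are equalities. The second forces $\lambda_{D_1}(z)=\lambda_{D_2}(z)$, and the first then forces $\lambda_W(z)=\lambda_{D_1}(z)=\lambda_{D_2}(z)$. So I need the \emph{strict} form of monotonicity: if $W\subset D$ are hyperbolic domains, $z\in W$, and $\lambda_W(z)=\lambda_D(z)$, then $W=D$.

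I would prove this by lifting to universal coverings. Let $\pi_W:\D\to W$ and $\pi_D:\D\to D$ be universal coverings with $\pi_W(0)=\pi_D(0)=z$. The inclusion $\iota:W\to D$ lifts to a holomorphic map $F:\D\to\D$ with $F(0)=0$ and $\pi_D\circ F=\pi_W$. Differentiating at $0$ gives $\lambda_D(z)|F'(0)|=\lambda_W(z)$. Equality of the densities then gives $|F'(0)|=1$, so by the Schwarz lemma $F$ is a rotation, in particular surjective. Hence $W=\pi_W(\D)=\pi_D(F(\D))=\pi_D(\D)=D$.

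Applying this with $D=D_1$ and then $D=D_2$ yields $D_1=W=D_2$. Conversely, $D_1=D_2$ gives $Q=\frac12$ at every point, as already noted. This holds in $\mathcal H_R$ and $\mathcal S_R$ alike.

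The only step needing care is this strict-monotonicity lemma. Specifically, one must check that the lift $F$ exists, which holds because $\D$ is simply connected, and that the density relation $\lambda_W(z)=\lambda_D(z)|F'(0)|$ holds with the paper's normalization $\lambda_\D(0)=1$. The remaining steps are routine, so I expect no real obstacle.
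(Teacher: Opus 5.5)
Your proof is correct and is essentially the paper's: monotonicity gives \(\lambda_W(z)\ge\max\{\lambda_{D_1}(z),\lambda_{D_2}(z)\}\ge\frac12(\lambda_{D_1}(z)+\lambda_{D_2}(z))\), the choice \(D_1=D_2\) attains \(\frac12\), and equality forces \(W=D_1=D_2\) by strict monotonicity. The paper takes that strict monotonicity from Lemma~\ref{lem:mono}, via the equality case of Schwarz--Pick, while you reprove it directly by lifting the inclusion to \(F:\D\to\D\).

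One slip in that lift: from \(\pi_D\circ F=\pi_W\) and \(\lambda_D(z)|\pi_D'(0)|=\lambda_W(z)|\pi_W'(0)|=1\), the correct relation is \(\lambda_W(z)\,|F'(0)|=\lambda_D(z)\), not \(\lambda_D(z)\,|F'(0)|=\lambda_W(z)\). Your version together with \(|F'(0)|\le1\) would reverse the monotonicity you used in the first paragraph. It is harmless here, since equality of the two densities still yields \(|F'(0)|=1\).
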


\subsection{Related multiplicative intersection inequalities}\label{subsec:previous-work}

A different but closely related inequality was proved by Solynin. For two
hyperbolic domains \(D_1,D_2\) such that \(D_1\cup D_2\) is hyperbolic, one has
\begin{equation}\label{eq:previous_work_v1}
\lambda_{D_1\cap D_2}(z)\,\lambda_{D_1\cup D_2}(z)
\le
\lambda_{D_1}(z)\,\lambda_{D_2}(z),
\qquad z\in D_1\cap D_2.
\end{equation}
In the two-domain form stated above, see also \cite[Theorem~1.1]{KrausRoth2016}
for a direct proof. Solynin established a more general version in the setting
of ordered systems of domains; see \cite[Theorem~2]{Solynin1999}. Since
\eqref{eq:previous_work_v1} is multiplicative rather than additive, it does not
resolve Problem~\ref{prob:fuchs}.

\subsection{Paper organization}\label{subsec:organization}

Section~\ref{sec:preliminaries} collects the background material and auxiliary results on the hyperbolic metric and logarithmic capacity used in the sequel. In Section~\ref{sec:general-upper} we prove Theorem~\ref{thm:gen-upper} by means of punctured-disk examples. Section~\ref{sec:sc-upper} is devoted to the proof of Theorem~\ref{thm:sc-upper}. In Section~\ref{sec:lower-bounds} we prove Theorem~\ref{thm:lower} and characterize the equality case for the lower bound. Finally, Section~\ref{sec:conclusion} discusses the remaining open problem concerning the attainability of \(A_{\mathrm{sc}}(R)=1\).

\section{Preliminaries}\label{sec:preliminaries}

We collect the standard facts on the hyperbolic metric and logarithmic capacity that will be used in the proofs. For general background on the hyperbolic metric, see \cite{Ahlfors1973,BeardonMinda2007}.

\subsection{Basic facts about the hyperbolic metric}\label{subsec:hyperbolic-basics}

A basic characterization that we shall use repeatedly is the following. If \(G\) is a hyperbolic domain and \(\pi:\D\to G\) is a universal covering map, then the hyperbolic metric on \(G\) is the unique conformal metric satisfying
\begin{equation}\label{eq:cover-characterization}
\lambda_G(\pi(\zeta))\,|\pi'(\zeta)|=\lambda_{\D}(\zeta),\qquad \zeta\in\D.
\end{equation}

We begin with the covering-space form of conformal invariance.

\begin{lemma}\label{lem:covering-isometry}
Let \(p:\widetilde G\to G\) be a holomorphic covering map between hyperbolic domains. Then
\[
\lambda_G(p(w))\,|p'(w)|=\lambda_{\widetilde G}(w),\qquad w\in\widetilde G.
\]
\end{lemma}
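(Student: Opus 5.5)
Let \(\pi_{\widetilde G}:\D\to\widetilde G\) be a universal covering map of \(\widetilde G\). The plan is to show that \(\pi:=p\circ\pi_{\widetilde G}:\D\to G\) is a universal covering map of \(G\), and then to compare the two instances of \eqref{eq:cover-characterization}.

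First, \(\pi\) is a covering map. A composition of covering maps need not be a covering in general, but here it is, because \(\D\) is simply connected and all spaces are connected, locally path-connected manifolds. Indeed, \(\pi_{\widetilde G}\) is a universal cover of \(\widetilde G\) and \(p\) is a covering, so the universal cover of \(G\) factors through \(\widetilde G\). Equivalently, \(p\circ\pi_{\widetilde G}\) is a covering by the lifting criterion and the uniqueness of the simply connected cover. Since \(\D\) is simply connected, \(\pi\) is a universal covering of \(G\), and it is holomorphic as a composition of holomorphic maps.

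Next, apply \eqref{eq:cover-characterization} to \(\pi\) and to \(\pi_{\widetilde G}\). For \(\zeta\in\D\), set \(w=\pi_{\widetilde G}(\zeta)\). Then
\[
\lambda_G(p(w))\,|p'(w)|\,|\pi_{\widetilde G}'(\zeta)|=\lambda_{\D}(\zeta)=\lambda_{\widetilde G}(w)\,|\pi_{\widetilde G}'(\zeta)|.
\]
Since \(\pi_{\widetilde G}\) is a holomorphic covering, it is locally injective, so \(\pi_{\widetilde G}'(\zeta)\neq0\). Dividing by this factor gives the identity at \(w\). Because \(\pi_{\widetilde G}\) is surjective, every \(w\in\widetilde G\) arises as some \(\pi_{\widetilde G}(\zeta)\), which proves the lemma for all \(w\).

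The main obstacle is the first step, the fact that the composition is a covering map. If one prefers to avoid it, there is an alternative route. Define the metric \(\rho(w)=\lambda_G(p(w))|p'(w)|\) on \(\widetilde G\). Locally \(p\) is a biholomorphism, so \(\rho\) is a complete metric of constant curvature \(-1\); completeness comes from lifting paths along the covering. Uniqueness of such a metric then gives \(\rho=\lambda_{\widetilde G}\). I would nonetheless use the first argument, citing standard covering space theory.
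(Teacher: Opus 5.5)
Your proof is correct and is essentially the paper's argument: compose a universal covering $\pi:\D\to\widetilde G$ with $p$, apply \eqref{eq:cover-characterization} to both $\pi$ and $p\circ\pi$, cancel $|\pi'(\zeta)|\neq 0$, and use surjectivity of $\pi$. You also justify that $p\circ\pi$ is a universal covering (the paper merely asserts this) and make the nonvanishing of $\pi'$ explicit; the alternative curvature route is not needed, and under the paper's normalization $\lambda_{\D}(z)=1/(1-|z|^2)$ the relevant curvature would be $-4$ rather than $-1$.
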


\begin{proof}
Let \(\pi:\D\to \widetilde G\) be a universal covering map. Then \(p\circ\pi:\D\to G\) is also a universal covering map. Applying \eqref{eq:cover-characterization} first to \(\pi\) and then to \(p\circ\pi\), we obtain
\[
\lambda_{\widetilde G}(\pi(\zeta))\,|\pi'(\zeta)|=\lambda_{\D}(\zeta)
\]
and
\[
\lambda_G\bigl(p(\pi(\zeta))\bigr)\,|(p\circ\pi)'(\zeta)|=\lambda_{\D}(\zeta),
\]
for every \(\zeta\in\D\). Comparing the two identities gives
\[
\lambda_G\bigl(p(\pi(\zeta))\bigr)\,|p'(\pi(\zeta))|=\lambda_{\widetilde G}(\pi(\zeta)).
\]
Since \(\pi\) is surjective, the identity holds for every point of \(\widetilde G\).
\end{proof}

The usual biholomorphic invariance is an immediate consequence.

\begin{lemma}\label{lem:conf}
If \(f:G\to H\) is biholomorphic between hyperbolic domains, then
\[
\lambda_H(f(z))\,|f'(z)|=\lambda_G(z),\qquad z\in G.
\]
\end{lemma}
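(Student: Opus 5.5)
The plan is to derive Lemma~\ref{lem:conf} as a special case of Lemma~\ref{lem:covering-isometry}, which is already proved. A biholomorphic map is in particular a holomorphic covering map: it is a one-sheeted covering, since it is a homeomorphism. I therefore apply Lemma~\ref{lem:covering-isometry} with \(\widetilde G=G\), with the target \(H\) in place of the domain called \(G\) there, and with \(p=f\).

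The key steps, in order, are as follows.

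\begin{enumerate}
\item Check the covering property. Since \(f:G\to H\) is a holomorphic bijection with holomorphic inverse, it is a homeomorphism. Every point \(w\in H\) then has an evenly covered neighbourhood, namely \(H\) itself, whose preimage is the single sheet \(G\).
\item Both \(G\) and \(H\) are hyperbolic by hypothesis, so the assumptions of Lemma~\ref{lem:covering-isometry} are met.
\item That lemma then gives
\[
\lambda_H(f(z))\,|f'(z)|=\lambda_G(z),\qquad z\in G,
\]
which is exactly the claimed identity.
\end{enumerate}

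As a remark, \(f'(z)\neq0\) holds automatically because \(f\) is injective. This is consistent with the formula, but it is not needed in the argument.

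There is no real obstacle here. The only point that needs care is to confirm that the notion of ``holomorphic covering map'' in Lemma~\ref{lem:covering-isometry} includes homeomorphisms. If one prefers to avoid this, there is an equally short direct route through \eqref{eq:cover-characterization}. Take a universal cover \(\pi:\D\to G\). Then \(f\circ\pi:\D\to H\) is a universal cover of \(H\). Applying \eqref{eq:cover-characterization} to both \(\pi\) and \(f\circ\pi\), and using the chain rule together with the surjectivity of \(\pi\), gives the identity.
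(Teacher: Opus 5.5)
Your proposal is correct and matches the paper's proof exactly: it observes that a biholomorphism is a (one-sheeted) holomorphic covering map and applies Lemma~\ref{lem:covering-isometry} with \(p=f\). Your alternative direct route via \eqref{eq:cover-characterization} and \(f\circ\pi\) just reproduces the proof of Lemma~\ref{lem:covering-isometry} in this special case, so it is also fine.
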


\begin{proof}
A biholomorphism is, in particular, a holomorphic covering map, so the conclusion follows from Lemma~\ref{lem:covering-isometry}.
\end{proof}

We shall also use the monotonicity of the hyperbolic metric with respect to domain inclusion.

\begin{lemma}\label{lem:mono}
If \(G\subsetneq H\) are hyperbolic domains, then
\[
\lambda_H(z)<\lambda_G(z),\qquad z\in G.
\]
\end{lemma}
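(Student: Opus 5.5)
The plan is to lift the inclusion to universal covers and apply the Schwarz--Pick lemma, separating the weak inequality from the strict one. Fix $z_0\in G$ and choose universal covering maps $\pi_G:\D\to G$ and $\pi_H:\D\to H$ normalized by $\pi_G(0)=\pi_H(0)=z_0$. The inclusion $\iota:G\hookrightarrow H$ composed with $\pi_G$ is a holomorphic map $\D\to H$. Since $\D$ is simply connected, it lifts through the covering $\pi_H$ to a holomorphic map $f:\D\to\D$ with $f(0)=0$ and $\pi_H\circ f=\pi_G$.

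For the weak inequality, differentiate at $0$ to get $\pi_H'(0)f'(0)=\pi_G'(0)$. By \eqref{eq:cover-characterization} applied at $\zeta=0$, together with $\lambda_{\D}(0)=1$, we have
\[
\lambda_G(z_0)=\frac{1}{|\pi_G'(0)|},\qquad \lambda_H(z_0)=\frac{1}{|\pi_H'(0)|}.
\]
Hence $\lambda_H(z_0)/\lambda_G(z_0)=|\pi_G'(0)|/|\pi_H'(0)|=|f'(0)|$. The Schwarz lemma gives $|f'(0)|\le1$, so $\lambda_H(z_0)\le\lambda_G(z_0)$.

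For strictness, suppose $|f'(0)|=1$. The equality case of the Schwarz lemma then makes $f$ a rotation, so $f$ is an automorphism of $\D$. It follows that $\pi_G=\pi_H\circ f$ is surjective onto $H$. But the image of $\pi_G$ is $G\subsetneq H$, a contradiction. Therefore $|f'(0)|<1$, which gives $\lambda_H(z_0)<\lambda_G(z_0)$. Since $z_0\in G$ was arbitrary, the lemma follows.

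The main obstacle is only the lifting step. It requires the standard lifting criterion for coverings: $\D$ is simply connected and locally path connected. It also requires that the lift is holomorphic, which holds because $\pi_H$ is a local biholomorphism. Everything else is the Schwarz lemma. The comparison at an arbitrary point needs no further argument, because the base point $z_0$ was chosen freely through the normalization of the covering maps.
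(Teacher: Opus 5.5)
Your proof is correct and follows the same route as the paper: the Schwarz--Pick lemma applied to the inclusion $G\hookrightarrow H$, with the equality case forcing surjectivity onto $H$ and hence $G=H$. The only difference is that the paper cites the generalized Schwarz--Pick lemma and its equality clause from Beardon--Minda. You instead derive both directly, by lifting $\pi_G$ through $\pi_H$ and applying the classical Schwarz lemma, which makes your argument self-contained.
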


\begin{proof}
Apply the Schwarz--Pick lemma to the inclusion map
\[
i\colon G\hookrightarrow H,\qquad i(z)=z;
\]
see, for example, \cite[Theorem~10.5]{BeardonMinda2007}. Since \(i'(z)=1\),
we obtain
\[
\lambda_H(z)=\lambda_H(i(z))\,|i'(z)|\le \lambda_G(z),
\qquad z\in G.
\]
It remains to show that equality cannot occur. Suppose, to the contrary, that
for some \(z_0\in G\) one has
\[
\lambda_H(z_0)=\lambda_G(z_0).
\]
Then equality holds at \(z_0\) in the Schwarz--Pick inequality for \(i\). By
the equality statement in \cite[Theorem~10.5]{BeardonMinda2007}, the map \(i\)
must be a covering of \(H\). However, the inclusion map \(i\colon G\hookrightarrow H\)
is surjective only if \(G=H\), contradicting the assumption \(G\subsetneq H\).
Therefore equality is impossible, and hence
\[
\lambda_H(z)<\lambda_G(z),\qquad z\in G.
\qedhere\]
\end{proof}

The next lemma records the scaling behavior that is responsible for the independence of \(R\).

\begin{lemma}\label{lem:scaling}
Let \(t>0\) and let \(G\) be a hyperbolic domain. Then
\[
\lambda_{tG}(tz)=\frac1t\,\lambda_G(z),\qquad z\in G,
\]
where \(tG:=\{tw:w\in G\}\).
\end{lemma}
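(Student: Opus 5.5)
The identity follows directly from the biholomorphic invariance in Lemma~\ref{lem:conf}, applied to the dilation
\[
f\colon G\to tG,\qquad f(z)=tz .
\]

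First, I would check that the hypotheses of Lemma~\ref{lem:conf} hold. The map \(f\) is holomorphic and injective on \(\C\), and its inverse \(w\mapsto w/t\) is also holomorphic. Hence \(f\) restricts to a biholomorphism from \(G\) onto \(tG\). The image \(tG\) is a domain, since it is open and connected as the image of one under a homeomorphism of \(\C\). It is also hyperbolic: \(\C\setminus tG=t(\C\setminus G)\) contains at least two points because \(\C\setminus G\) does. If one prefers the sphere formulation, the dilation extends to a Möbius map fixing \(\infty\), so the complement in \(\whC\) still has at least three points.

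Next, I would apply Lemma~\ref{lem:conf} with \(H=tG\). Since \(f'(z)=t\), it gives
\[
\lambda_{tG}(tz)\,t=\lambda_G(z),\qquad z\in G .
\]
Dividing by \(t>0\) yields the claim.

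There is no genuine obstacle here. The only point needing care is confirming that \(tG\) is a hyperbolic domain, so that Lemma~\ref{lem:conf} applies, and this was handled in the first step.
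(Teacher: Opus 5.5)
Your proof is correct and takes the same approach as the paper: apply Lemma~\ref{lem:conf} to the dilation $f(z)=tz$ from $G$ onto $tG$, use $f'(z)=t$, and divide by $t$. Your explicit check that $tG$ is again a hyperbolic domain is a detail the paper leaves implicit.
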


\begin{proof}
Apply Lemma~\ref{lem:conf} to the biholomorphism \(f(z)=tz\) from \(G\) onto \(tG\).
\end{proof}

The next two lemmas give the explicit formulas needed for the punctured-disk construction in Section~\ref{sec:general-upper}.

\begin{lemma}\label{lem:punctured-disk}
Let \(\D^*:=\D\setminus\{0\}\). Then
\[
\lambda_{\D^*}(w)=\frac{1}{2|w|\log(1/|w|)},\qquad 0<|w|<1.
\]
\end{lemma}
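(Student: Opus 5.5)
The plan is to use the universal covering map \(\pi:\D\to\D^*\) given by
\[
\pi(\zeta)=\exp\!\Bigl(-\frac{1+\zeta}{1-\zeta}\Bigr),
\]
which is the composition of the Cayley map \(\zeta\mapsto s=\frac{1+\zeta}{1-\zeta}\) (onto the right half-plane \(H=\{\operatorname{Re}s>0\}\)) with \(s\mapsto e^{-s}\). The map \(s\mapsto e^{-s}\) is a holomorphic covering of \(\D^*\) by \(H\): its image is \(\{0<|w|<1\}\), and it is a covering with deck group generated by \(s\mapsto s+2\pi i\). Composing with the biholomorphism \(\D\to H\) gives a universal covering, so \eqref{eq:cover-characterization}, or equivalently Lemma~\ref{lem:covering-isometry}, applies.

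It is convenient to work in two steps. First, compute \(\lambda_H\) from Lemma~\ref{lem:conf} applied to the Cayley map. With \(\zeta=\frac{s-1}{s+1}\) one has
\[
1-|\zeta|^2=\frac{4\operatorname{Re}s}{|s+1|^2}\quad\text{and}\quad \Bigl|\frac{d\zeta}{ds}\Bigr|=\frac{2}{|s+1|^2}.
\]
Hence
\[
\lambda_H(s)=\frac{1}{1-|\zeta|^2}\cdot\frac{2}{|s+1|^2}=\frac{1}{2\operatorname{Re}s}.
\]
Second, apply Lemma~\ref{lem:covering-isometry} to the covering \(p(s)=e^{-s}\) from \(H\) onto \(\D^*\). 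This gives
\[
\lambda_{\D^*}(e^{-s})\,|e^{-s}|=\frac{1}{2\operatorname{Re}s}.
\]
Writing \(w=e^{-s}\), we have \(|w|=e^{-\operatorname{Re}s}\), so \(\operatorname{Re}s=\log(1/|w|)\). Substituting yields \(\lambda_{\D^*}(w)=\frac{1}{2|w|\log(1/|w|)}\). Since \(p\) is surjective onto \(\D^*\), the formula holds for every \(0<|w|<1\).

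The only real point to check is that \(e^{-s}\) is genuinely a covering map, not merely a local biholomorphism. This is standard: over a small disk around \(w_0\ne0\), the preimage splits into the disjoint branches of \(-\log w\), which differ by integer multiples of \(2\pi i\), and each branch maps biholomorphically onto the disk while staying inside \(H\) because \(|w|<1\). The remaining algebra is routine, and the normalization constant \(\tfrac12\) is consistent with \(\lambda_{\D}(z)=(1-|z|^2)^{-1}\).
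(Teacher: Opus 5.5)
Your proof is correct: the Cayley-map computation $\lambda_H(s)=1/(2\operatorname{Re}s)$ and the pull-back through the covering $s\mapsto e^{-s}$ (using $|p'(s)|=|w|$ and $\operatorname{Re}s=\log(1/|w|)$) are both right, and the constant agrees with the paper's normalization $\lambda_{\D}(0)=1$. The paper gives no argument of its own, only a citation to Beardon--Minda (Section~12.1), so your write-up supplies the standard exponential-covering derivation that the citation stands in for.
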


\begin{proof}
This is standard; see~\cite[Section~12.1]{BeardonMinda2007}.
\end{proof}

As an immediate consequence, we obtain the density at the origin of a one-punctured disk.

\begin{lemma}\label{lem:one-puncture}
Fix \(a\in(0,1)\). Then
\[
\lambda_{\D\setminus\{a\}}(0)=\frac{1-a^2}{2a\log(1/a)}.
\]
The same formula holds for \(\D\setminus\{-a\}\).
\end{lemma}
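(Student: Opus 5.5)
We transport the problem to the punctured disk by a disk automorphism and then apply Lemma~\ref{lem:conf} together with Lemma~\ref{lem:punctured-disk}. Let
\[
\varphi(z)=\frac{z-a}{1-az},
\]
which is a biholomorphism of \(\D\) onto itself with \(\varphi(a)=0\). It therefore maps \(\D\setminus\{a\}\) biholomorphically onto \(\D^*\). Both domains are hyperbolic, so Lemma~\ref{lem:conf} applies and gives
\[
\lambda_{\D\setminus\{a\}}(0)=\lambda_{\D^*}(\varphi(0))\,|\varphi'(0)|.
\]

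Next we evaluate the two factors. We have \(\varphi(0)=-a\), so \(|\varphi(0)|=a\in(0,1)\). Differentiating,
\[
\varphi'(z)=\frac{1-a^2}{(1-az)^2},
\]
so \(\varphi'(0)=1-a^2\). Lemma~\ref{lem:punctured-disk} with \(|w|=a\) gives
\[
\lambda_{\D^*}(-a)=\frac{1}{2a\log(1/a)}.
\]
Multiplying the two factors yields the claimed formula \(\frac{1-a^2}{2a\log(1/a)}\).

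For \(\D\setminus\{-a\}\), we use the rotation \(z\mapsto -z\). It maps \(\D\setminus\{a\}\) onto \(\D\setminus\{-a\}\), fixes \(0\), and has derivative of modulus \(1\). Lemma~\ref{lem:conf} then shows that the two densities at the origin coincide. Alternatively, one can repeat the computation above with \(\psi(z)=(z+a)/(1+az)\).

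No step here is a genuine obstacle. The only points needing care are that \(\varphi(0)\) has modulus \(a\), so the punctured-disk formula is evaluated at the right radius, and that the factor \(|\varphi'(0)|=1-a^2\) is applied on the correct side of the invariance identity.
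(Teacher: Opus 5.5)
Your proposal is correct and follows the paper's proof. You move the puncture to the origin with the disk automorphism $\varphi(z)=(z-a)/(1-az)$, use $\varphi(0)=-a$ and $\varphi'(0)=1-a^2$, and combine Lemma~\ref{lem:conf} with Lemma~\ref{lem:punctured-disk}. Your treatment of $\D\setminus\{-a\}$ via the rotation $z\mapsto -z$ is an equally valid substitute for the paper's ``same computation with $-a$ in place of $a$''.
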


\begin{proof}
Consider the disk automorphism
\[
\phi_a(z)=\frac{z-a}{1-\overline{a}z},
\]
which maps \(\D\setminus\{a\}\) biholomorphically onto \(\D^*\) and satisfies
\[
\phi_a(0)=-a,
\qquad
\phi_a'(0)=1-a^2.
\]
By Lemma~\ref{lem:conf} and Lemma~\ref{lem:punctured-disk},
\[
\lambda_{\D\setminus\{a\}}(0)=\lambda_{\D^*}(-a)\,|\phi_a'(0)|
=\frac{1-a^2}{2a\log(1/a)}.
\]
The same computation with \(-a\) in place of \(a\) gives the second statement.
\end{proof}

\subsection{Intersection components of simply connected domains}\label{subsec:intersection-topology}

We next record a small topological lemma, which shows that, inside a disk, connected components of intersections of simply connected domains remain simply connected.

\begin{lemma}\label{lem:component-intersection-sc}
Let \(G_1,G_2\subset \DR\) be simply connected domains, and let \(U\) be a connected component of \(G_1\cap G_2\). Then \(U\) is simply connected.
\end{lemma}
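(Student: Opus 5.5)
We use the characterization that a bounded plane domain \(U\subset\C\) is simply connected if and only if \(\whC\setminus U\) is connected. Equivalently, \(U\) is simply connected if and only if every Jordan curve \(\gamma\subset U\) has its interior domain \(\operatorname{int}(\gamma)\) contained in \(U\). The Jordan-curve formulation is the most direct route, so we take it.

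Let \(\gamma\subset U\) be a Jordan curve, and let \(\Omega=\operatorname{int}(\gamma)\) be the bounded complementary component of \(\gamma\). Since \(\gamma\subset G_1\) and \(G_1\) is simply connected, we get \(\Omega\subset G_1\). The same argument gives \(\Omega\subset G_2\). Hence \(\overline\Omega=\Omega\cup\gamma\subset G_1\cap G_2\). The set \(\overline\Omega\) is connected and meets \(U\) along \(\gamma\). Because \(U\) is a connected component of \(G_1\cap G_2\), it is maximal among connected subsets, so \(U\cup\overline\Omega\) is a connected subset of \(G_1\cap G_2\) containing \(U\). Therefore \(\overline\Omega\subset U\). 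It follows that every Jordan curve in \(U\) bounds inside \(U\), so \(U\) is simply connected.

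The step needing care is the equivalence "simply connected \(\iff\) the interior of every Jordan curve in the domain lies in the domain." The forward direction is what we use for \(G_1\) and \(G_2\), and the reverse direction is what we use for \(U\). Both are standard for plane domains; see e.g.\ Ahlfors. If one prefers to avoid the reverse direction, we can argue with the complement instead. Suppose \(\whC\setminus U\) were disconnected. Then it splits into two disjoint nonempty closed sets \(E\) and \(F\) with \(\infty\in F\), so \(E\) is a compact subset of \(\C\). Choose a polygonal Jordan curve (or a finite union of them) in \(U\) separating \(E\) from \(F\). At least one such curve \(\gamma\) has a point of \(E\) in its interior, and that point does not belong to \(U\). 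This contradicts \(\operatorname{int}(\gamma)\subset U\), which we showed above. The separating curve comes from the standard grid-square construction.

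We note that boundedness (the hypothesis \(U\subset\DR\)) is not essentially needed for this argument, but it makes the interior of each Jordan curve unambiguous.
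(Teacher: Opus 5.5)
Your proof is correct, but it is not the paper's route. The paper gives no argument of its own: it translates a point of \(U\) to the origin and quotes the reduced-intersection property (RI1) of Bauer--Kraus--Roth--Wegert. You instead give a self-contained topological proof. The Jordan-curve criterion for \(G_1\) and \(G_2\) puts \(\operatorname{int}(\gamma)\cup\gamma\) inside \(G_1\cap G_2\), maximality of the component \(U\) pulls it into \(U\), and the converse criterion concludes.

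This buys transparency and generality. As you note, \(G_j\subset\DR\) plays no role; what is really used is only \(G_1,G_2\subset\C\), i.e.\ that \(\infty\) lies in both complements. For domains in \(\whC\) the statement fails: \(\whC\setminus\{0\}\) and \(\C\) are simply connected but meet in \(\C\setminus\{0\}\).

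The cost is that the converse of the Jordan-curve criterion is not literally what Ahlfors states, since he uses cycles and winding numbers. Your sketch should therefore be made precise. The grid cycle \(\sigma\subset U\) has \(n(\sigma,a)=1\) for some \(a\in E\). As a union of oriented grid edges with equal in- and out-degree at every vertex, it splits into edge-disjoint simple closed polygons. Winding numbers add, so one of them, \(\gamma\), has \(n(\gamma,a)\neq 0\), i.e.\ \(a\in\operatorname{int}(\gamma)\setminus U\).

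Alternatively, you can skip Jordan curves altogether and verify Ahlfors's winding-number criterion for \(U\) directly. For a cycle \(\sigma\subset U\), \(n(\sigma,\cdot)\) vanishes off \(G_1\cap G_2\). On any other component \(V\) of \(G_1\cap G_2\) it is constant and equals \(0\), because \(\partial V\) lies outside \(G_1\cap G_2\) and \(n(\sigma,\cdot)\) is locally constant there.
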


\begin{proof}
This is standard. For instance, after translating a point of \(U\) to the origin,
the assertion follows from the reduced-intersection property for simply connected
plane domains; see \cite[Definition~3.5 and property \((\mathrm{RI}1)\)]{BauerKrausRothWegert2010}.
\end{proof}

\subsection{Logarithmic capacity and inversion}\label{subsec:capacity}

We now turn to logarithmic capacity, which provides a convenient reformulation of the hyperbolic density at the origin for simply connected subdomains of~\(\D\). For the conformal-mapping characterization used below, see \cite[Section~2]{LiesenSeteNasser2017} or \cite[Theorem~3.5]{Saff2010}. For general background on logarithmic capacity, Green functions, and the extended complex plane, we also refer the reader to \cite{Ransford1995}.

Let \(K\subset\C\) be a compact set such that the unbounded component
\(\Omega_K\) of \(\widehat{\C}\setminus K\) is simply connected, and assume that
\(K\) is not a singleton. Then there exists a unique conformal map
\[
\Phi_K:\Omega_K\to\{w:|w|>1\}
\]
satisfying
\[
\Phi_K(\infty)=\infty,
\qquad
\Phi_K'(\infty):=\lim_{z\to\infty}\frac{\Phi_K(z)}{z}>0.
\]
Moreover,
\[
\Phi_K(z)=\frac{z}{\mu}+\mu_0+O\!\left(\frac1z\right)
\qquad (z\to\infty)
\]
for some \(\mu>0\). Under these assumptions, the logarithmic capacity of \(K\) is
\[
\capf(K)=\mu=\frac1{\Phi_K'(\infty)}.
\]

Equivalently, if
\[
g_K:=\Phi_K^{-1}:\{|w|>1\}\to\Omega_K,
\]
then
\[
g_K(\infty)=\infty,
\qquad
g_K'(\infty):=\lim_{w\to\infty}\frac{g_K(w)}{w}=\capf(K)>0.
\]
We call \(g_K\) the \emph{normalized exterior conformal map} of \(K\). All sets \(K\) considered below satisfy these assumptions.

We first record the basic transformation rule for capacity.

\begin{lemma}\label{lem:cap-transform}
Let \(K,L\subset\C\) be bounded continua whose unbounded complementary components are simply connected. Assume in addition that neither $K$ nor $L$ is a singleton. Let
\[
h:\Omega_K\to \Omega_L
\]
be conformal with
\[
h(\infty)=\infty,
\qquad
h'(\infty):=\lim_{z\to\infty}\frac{h(z)}{z}>0.
\]
Then
\[
\capf(L)=h'(\infty)\,\capf(K).
\]
\end{lemma}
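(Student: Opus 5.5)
The plan is to compose the normalized exterior maps and read off the capacity from the coefficient of $w$ at infinity. Let $g_K:\{|w|>1\}\to\Omega_K$ and $g_L:\{|w|>1\}\to\Omega_L$ be the normalized exterior conformal maps, so that $g_K'(\infty)=\capf(K)$ and $g_L'(\infty)=\capf(L)$. Consider
\[
F:=g_L^{-1}\circ h\circ g_K:\{|w|>1\}\to\{|w|>1\}.
\]
It is a conformal self-map of the exterior disk fixing $\infty$.

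The key step is to identify $F$. Conjugating by $w\mapsto 1/w$ turns $F$ into a conformal automorphism of $\D$ fixing $0$. By the Schwarz lemma this automorphism is a rotation, so $F(w)=e^{i\theta}w$ for some real $\theta$.

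Next we compute the derivative of $F$ at $\infty$. The maps $g_K$, $h$ and $g_L^{-1}$ each behave like a positive multiple of the identity near $\infty$. Hence the limits $\lim F(w)/w$ can be multiplied along the composition. Concretely, write
\[
\frac{F(w)}{w}=\frac{g_L^{-1}(h(g_K(w)))}{h(g_K(w))}\cdot\frac{h(g_K(w))}{g_K(w)}\cdot\frac{g_K(w)}{w}.
\]
As $w\to\infty$, we have $g_K(w)\to\infty$ and $h(g_K(w))\to\infty$. Since $g_L^{-1}=\Phi_L$ and $\Phi_L'(\infty)=1/\capf(L)$, the product tends to
\[
\frac{1}{\capf(L)}\cdot h'(\infty)\cdot\capf(K).
\]
This limit is positive. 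It also equals $e^{i\theta}$, so $e^{i\theta}=1$, and therefore
\[
h'(\infty)\capf(K)=\capf(L).
\]

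The only point needing care, and the main obstacle, is justifying that these limits exist and multiply. The existence of $g_K'(\infty)$ and $\Phi_L'(\infty)$ comes from the expansions recalled above. The existence of $h'(\infty)$ is part of the hypothesis. Each limit is a finite nonzero number, and each intermediate point tends to $\infty$, so the limit of the product is the product of the limits. No continuity of $h$ beyond its conformality near $\infty$ is needed.
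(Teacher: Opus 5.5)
Your proposal is correct and is essentially the paper's argument. The paper simply notes that $h\circ g_K$ is the normalized exterior conformal map of $L$, using the uniqueness stated in the background, and then applies the chain rule at $\infty$. You make that uniqueness step explicit by applying the Schwarz lemma to $g_L^{-1}\circ h\circ g_K$, and you also justify the limit computation.
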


\begin{proof}
Let \(g_K:\{|w|>1\}\to \Omega_K\) be the normalized exterior conformal map of \(K\). Then \(h\circ g_K\) is the normalized exterior conformal map of \(L\), and
\[
(h\circ g_K)'(\infty)=h'(\infty)g_K'(\infty)=h'(\infty)\,\capf(K).
\]
By definition, \((h\circ g_K)'(\infty)=\capf(L)\).
\end{proof}

To carry out the sharpness argument in Section~\ref{sec:sc-upper}, we will need an explicit conformal description of certain slit exterior domains in terms of the Joukowski map.

\begin{lemma}\label{lem:joukowski-slits}
Let
\[
J(w):=w+\frac1w,
\qquad
\Psi(z):=\frac12\bigl(z+\sqrt{z^2-4}\bigr),
\]
where \(\sqrt{z^2-4}\) denotes the branch on \(\widehat{\C}\setminus[-2,2]\) satisfying
\[
\sqrt{z^2-4}\sim z \qquad (z\to\infty).
\]
Then \(\Psi\) maps \(\widehat{\C}\setminus[-2,2]\) conformally onto \(\{w:|w|>1\}\), and \(J\) is its inverse. In particular,
\[
J(\infty)=\infty,
\qquad
J'(\infty)=1.
\]

Fix \(t>1\). Then \(J\) maps \(\{w:|w|>1\}\setminus[1,t]\) conformally onto \(\widehat{\C}\setminus[-2,t+t^{-1}]\), and maps \(\{w:|w|>1\}\setminus\bigl([-t,-1]\cup[1,t]\bigr)\) conformally onto \(\widehat{\C}\setminus[-(t+t^{-1}),t+t^{-1}]\).
\end{lemma}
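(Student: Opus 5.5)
The first claim is standard, so I would dispatch it quickly and then spend most of the effort on the two slit statements.

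\emph{The map $\Psi$.} For $|w|>1$, the equation $J(w)=z$ is $w^2-zw+1=0$. Its two roots have product $1$ and are given by $\frac12\bigl(z\pm\sqrt{z^2-4}\bigr)$.

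Next I would check that $J$ is injective on $\{|w|>1\}$. If $J(w_1)=J(w_2)$ with $w_1\ne w_2$, then $w_1w_2=1$, and this is impossible when both moduli exceed $1$.

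Then I would identify the image. Writing $w=re^{i\theta}$ gives $J(w)=(r+r^{-1})\cos\theta+i(r-r^{-1})\sin\theta$. So each circle $|w|=r>1$ is sent onto an ellipse with foci $\pm2$, while the unit circle collapses onto $[-2,2]$. Hence $J(\{|w|>1\})=\widehat{\C}\setminus[-2,2]$, with $\infty\mapsto\infty$.

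The inverse must be the root of larger modulus. Since $\sqrt{z^2-4}\sim z$, the root $\Psi(z)=\frac12\bigl(z+\sqrt{z^2-4}\bigr)\sim z$ is the large one near $\infty$. By continuity and connectedness of $\widehat{\C}\setminus[-2,2]$, it stays the root of modulus $>1$ throughout, because the two roots never have equal modulus off $[-2,2]$. Finally $J(w)/w=1+w^{-2}\to1$, which gives $J'(\infty)=1$.

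\emph{The slit statements.} Since $J$ is a bijection from $\{|w|>1\}$ onto $\widehat{\C}\setminus[-2,2]$, removing a set $S$ from the source removes exactly $J(S)$ from the target, and the restriction is still conformal. So it suffices to compute images of segments.

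For real $s\ge1$, the map $s\mapsto s+s^{-1}$ is increasing, so $J((1,t])=(2,t+t^{-1}]$. Hence
\[
J\bigl(\{|w|>1\}\setminus[1,t]\bigr)=\widehat{\C}\setminus\bigl([-2,2]\cup(2,t+t^{-1}]\bigr)=\widehat{\C}\setminus[-2,t+t^{-1}].
\]
Here I read $[1,t]$ intersected with the open set $\{|w|>1\}$ as $(1,t]$, which is harmless.

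By oddness, $J(-w)=-J(w)$, so $J([-t,-1))=[-(t+t^{-1}),-2)$. The two-slit statement then follows in the same way.

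\emph{Main obstacle.} The only delicate point is justifying that the chosen branch $\Psi$ is the larger-modulus root on the whole slit plane. I would handle it by the continuity argument above: $|w_1w_2|=1$ forces $|w_1|=|w_2|=1$ if the two moduli coincide, which happens exactly when $z\in[-2,2]$. Alternatively, one can simply invoke bijectivity of $J$ and define $\Psi:=J^{-1}$, checking the asymptotics at $\infty$.
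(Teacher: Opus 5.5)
Your proposal is correct and follows essentially the same route as the paper: the quadratic $w^2-zw+1=0$ with product of roots $1$, the continuity/connectedness argument showing $|\Psi|>1$ on $\widehat{\mathbb{C}}\setminus[-2,2]$, the limit $J(w)/w\to1$, and the slit images obtained from monotonicity of $s\mapsto s+s^{-1}$, oddness of $J$, and bijectivity. One small difference: you prove bijectivity of $J$ directly (injectivity from $w_1w_2=1$, surjectivity from the confocal ellipses) and then identify $J^{-1}$ with $\Psi$, whereas the paper checks $J\circ\Psi=\mathrm{id}$ and $\Psi\circ J=\mathrm{id}$; in addition, your use of $(1,t]$ handles the endpoint $w=1$ a little more carefully than the paper does.
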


\begin{proof}
For \(z\in\widehat{\C}\setminus[-2,2]\), the function \(\Psi(z)\) is holomorphic and satisfies
\[
\Psi(z)^2-z\Psi(z)+1=0,
\]
hence
\[
J(\Psi(z))=\Psi(z)+\frac1{\Psi(z)}=z.
\]
If \(|\Psi(z_0)|=1\) for some \(z_0\in\widehat{\C}\setminus[-2,2]\), then
\[
z_0=J(\Psi(z_0))\in J(\partial\D)=[-2,2],
\]
a contradiction. Since \(\widehat{\C}\setminus[-2,2]\) is connected and \(\Psi(z)\sim z\) as \(z\to\infty\), we conclude that
\[
|\Psi(z)|>1,\qquad z\in\widehat{\C}\setminus[-2,2].
\]

Now let \(|w|>1\). If \(J(w)\in[-2,2]\), then the quadratic equation
\[
u^2-J(w)u+1=0
\]
would have both roots on the unit circle, whereas its roots are exactly \(w\) and \(1/w\). This is impossible. Hence \(J(w)\in\widehat{\C}\setminus[-2,2]\). Since \(\Psi(J(w))\) is a root of the same quadratic and has modulus \(>1\), it must equal \(w\). Therefore \(J\) and \(\Psi\) are inverse conformal maps between \(\widehat{\C}\setminus[-2,2]\) and \(\{w:|w|>1\}\). The statement about \(J'(\infty)\) follows from
\[
J(w)=w+O(1/w)\qquad (w\to\infty).
\]

Now fix \(t>1\) and set \(s=t+t^{-1}\). Since \(x\mapsto x+x^{-1}\) is strictly increasing on \([1,\infty)\),
\[
J([1,t])=[2,s].
\]
By oddness of \(J\),
\[
J([-t,-1])=[-s,-2].
\]
Because \(J\) is injective on \(\{w:|w|>1\}\), we obtain
\[
J(\{w:|w|>1\}\setminus[1,t])
=J(\{w:|w|>1\})\setminus J([1,t])
=(\widehat{\C}\setminus[-2,2])\setminus[2,s]
=\widehat{\C}\setminus[-2,s],
\]
and similarly,
\begin{align*}
J\left(\{w:|w|>1\}\setminus\bigl([-t,-1]\cup[1,t]\bigr)\right)
&=J(\{w:|w|>1\})\setminus\bigl(J([-t,-1])\cup J([1,t])\bigr)
\\ &=(\widehat{\C}\setminus[-2,2])\setminus\bigl([-s,-2]\cup[2,s]\bigr)
=\widehat{\C}\setminus[-s,s].
\end{align*}
This proves the lemma.
\end{proof}

We also need the logarithmic capacity of a real interval, which will be used together with the preceding conformal model.

\begin{lemma}\label{lem:interval-cap}
If \(\alpha<\beta\), then
\[
\capf([\alpha,\beta])=\frac{\beta-\alpha}{4}.
\]
\end{lemma}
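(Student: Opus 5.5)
By translation invariance and Lemma~\ref{lem:cap-transform}, I would reduce to the standard interval $[-2,2]$ and then read off the capacity from the Joukowski model in Lemma~\ref{lem:joukowski-slits}.

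First, normalize. The affine map $h(z)=\frac{4}{\beta-\alpha}\bigl(z-\frac{\alpha+\beta}{2}\bigr)$ sends $[\alpha,\beta]$ onto $[-2,2]$. It maps $\Omega_{[\alpha,\beta]}=\widehat{\C}\setminus[\alpha,\beta]$ conformally onto $\widehat{\C}\setminus[-2,2]$, fixes $\infty$, and has $h'(\infty)=\frac{4}{\beta-\alpha}>0$. Both sets are nondegenerate continua whose complements are simply connected, since the complement of a segment in the sphere is simply connected. Lemma~\ref{lem:cap-transform} therefore gives
\[
\capf([-2,2])=\frac{4}{\beta-\alpha}\,\capf([\alpha,\beta]),
\]
so it suffices to show $\capf([-2,2])=1$.

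Second, compute $\capf([-2,2])$. By Lemma~\ref{lem:joukowski-slits}, the map $J(w)=w+1/w$ sends $\{|w|>1\}$ conformally onto $\widehat{\C}\setminus[-2,2]$. It also satisfies $J(\infty)=\infty$ and $J'(\infty)=1$. Hence $J$ is exactly the normalized exterior conformal map $g_{[-2,2]}$, by the uniqueness of $\Phi_K$ and hence of $g_K=\Phi_K^{-1}$. By the definition of capacity, $\capf([-2,2])=g_{[-2,2]}'(\infty)=1$. Combining this with the first step yields $\capf([\alpha,\beta])=\frac{\beta-\alpha}{4}$.

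There is no real obstacle here. The only point needing care is checking that the hypotheses of Lemma~\ref{lem:cap-transform} and the uniqueness normalization apply. This means confirming that $J$ has positive derivative at $\infty$, which follows from $J(w)=w+O(1/w)$, and that both domains are simply connected neighbourhoods of $\infty$ in the sphere.
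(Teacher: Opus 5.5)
Your argument is correct, but it takes a different route from the paper. The paper gives no derivation at all: it simply quotes the classical value $\capf([\alpha,\beta])=(\beta-\alpha)/4$ from Ransford's book.

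You instead derive it internally. The affine map $z\mapsto \frac{4}{\beta-\alpha}\bigl(z-\frac{\alpha+\beta}{2}\bigr)$ together with Lemma~\ref{lem:cap-transform} reduces the problem to $[-2,2]$. The first part of Lemma~\ref{lem:joukowski-slits} then identifies $J(w)=w+1/w$ as the normalized exterior map of $[-2,2]$, so $\capf([-2,2])=J'(\infty)=1$. There is no circularity, since neither of those lemmas uses Lemma~\ref{lem:interval-cap}.

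Your route makes the paper self-contained with respect to the conformal-mapping definition of capacity it adopts in Section~\ref{subsec:capacity}. The lemma becomes a short consequence of machinery the paper already needs for the sharpness proof in Section~\ref{sec:sc-upper]}. There, exactly the same Joukowski-plus-transformation mechanism carries the slit continua to the intervals $[-2,a+a^{-1}]$ and $[-(a+a^{-1}),a+a^{-1}]$. The interval value is then the only external input, and you show it needs none.

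The citation's advantage is that it ties the value directly to the standard potential-theoretic notion of capacity (energy or transfinite diameter). However, the paper already imports the equivalence of that notion with $g_K'(\infty)$ from the literature, so nothing of substance is lost by your approach.
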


\begin{proof}
This is classical; see \cite[Corollary~5.2.4]{Ransford1995}.
\end{proof}

For simply connected subdomains of the unit disk, the hyperbolic density at the origin is exactly a logarithmic capacity.

\begin{lemma}\label{lem:cap-identity}
Let \(G\subset\D\) be a simply connected domain and assume that \(0\in G\). Define
\[
K_G:=\left\{\frac1\zeta:\zeta\in \whC\setminus G\right\}.
\]
Then \(K_G\) is a bounded continuum containing \(\overline{\D}\), and
\[
\lambda_G(0)=\capf(K_G).
\]
\end{lemma}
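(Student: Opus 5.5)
We need to prove Lemma~\ref{lem:cap-identity}. The plan is to pass through the inversion \(\iota(\zeta)=1/\zeta\), which is a biholomorphism of \(\whC\) exchanging \(0\) and \(\infty\), and then compare the Riemann map of \(G\) with the normalized exterior map of \(K_G\).

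\emph{Topology of \(K_G\).} Since \(G\subset\D\), we have \(\whC\setminus G\supset \whC\setminus\D\), so \(K_G=\iota(\whC\setminus G)\supset\iota(\whC\setminus\D)=\overline{\D}\). Since \(0\in G\), the point \(\infty=\iota(0)\) lies in \(\iota(G)\), and \(\iota(G)\) is open, so \(K_G\) avoids a neighbourhood of \(\infty\) and is therefore bounded and closed in \(\C\), hence compact. Connectedness comes from the standard fact that a domain \(G\subset\whC\) is simply connected if and only if \(\whC\setminus G\) is connected. Thus \(\whC\setminus G\) is connected, and so is its homeomorphic image \(K_G\). Moreover \(\whC\setminus K_G=\iota(G)=:\Omega\) is connected, simply connected and contains \(\infty\). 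Hence \(\Omega=\Omega_{K_G}\) is the unbounded complementary component. Since \(K_G\supset\overline{\D}\), it is not a singleton, so the exterior map \(g_{K_G}\) exists.

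\emph{The density formula.} Let \(f:\D\to G\) be the Riemann map with \(f(0)=0\) and \(f'(0)>0\). By Lemma~\ref{lem:conf} and the normalization \(\lambda_\D(0)=1\), we get \(\lambda_G(0)=1/f'(0)\). Now set
\[
h(w):=\frac{1}{f(1/w)},\qquad |w|>1.
\]
This is a conformal map of \(\{|w|>1\}\cup\{\infty\}\) onto \(\iota(G)=\Omega_{K_G}\), with \(h(\infty)=\infty\). The expansion \(f(u)=f'(0)u+O(u^2)\) gives
\[
h(w)=\frac{w}{f'(0)}\bigl(1+O(1/w)\bigr),
\]
so \(h'(\infty)=1/f'(0)>0\). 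By uniqueness of the normalized exterior conformal map, \(h=g_{K_G}\). Therefore
\[
\capf(K_G)=h'(\infty)=\frac{1}{f'(0)}=\lambda_G(0).
\]
Alternatively, one can invoke Lemma~\ref{lem:cap-transform} with \(K=\overline{\D}\), whose capacity is \(1\).

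\emph{Main obstacle.} The only delicate point is the topological claim that simple connectivity of \(G\) forces \(\whC\setminus G\) to be connected, which in turn ensures that \(K_G\) is a continuum with \(\Omega_{K_G}=\iota(G)\). I would cite this standard characterization of simply connected plane domains rather than reprove it. Everything else is a direct normalization check.
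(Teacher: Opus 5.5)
Your proposal is correct and follows essentially the same route as the paper: you use the connectedness of \(\whC\setminus G\) and its inversion to get the bounded continuum \(K_G\supset\overline{\D}\) with \(\Omega_{K_G}=\iota(G)\). You then take the normalized Riemann map \(f\), form \(w\mapsto 1/f(1/w)\), and read off \(\capf(K_G)=1/f'(0)=\lambda_G(0)\) from the expansion at \(\infty\).
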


\begin{proof}
Since \(G\) is simply connected, the set \(\whC\setminus G\) is connected. As \(G\subset\D\), we have
\[
\whC\setminus G\supset \whC\setminus\D.
\]
Applying the inversion \(h(\zeta)=1/\zeta\), we see that \(K_G=h(\whC\setminus G)\) is connected and contains
\[
h(\whC\setminus\D)=\overline{\D}.
\]
Moreover, since \(0\in G\) and \(G\) is open, there exists \(r>0\) such that \(\{|\zeta|<r\}\subset G\). Hence
\[
\whC\setminus G\subset \{\zeta:|\zeta|\ge r\}\cup\{\infty\},
\]
and therefore
\[
K_G\subset \left\{w:|w|\le \frac1r\right\}.
\]
Thus \(K_G\) is bounded. Since it is also closed in \(\C\), it is compact. Hence \(K_G\) is a bounded continuum containing \(\overline{\D}\).

Let \(f:\D\to G\) be the Riemann map normalized by
\[
f(0)=0,
\qquad
f'(0)>0.
\]
By Lemma~\ref{lem:conf} and the normalization \(\lambda_{\D}(0)=1\), we have \(\lambda_G(0)\,f'(0)=1\), so that
\[
\lambda_G(0)=\frac1{f'(0)}.
\]

Now define
\[
g(w):=\frac1{f(1/w)},\qquad |w|>1.
\]
Since \(1/w\in\D\) for \(|w|>1\), the function \(g\) is holomorphic on \(\{w:|w|>1\}\). We claim that \(g\) maps \(\{w:|w|>1\}\) conformally onto the unbounded component \(\Omega_{K_G}\) of \(\whC\setminus K_G\). Indeed, the inversion \(h(\zeta)=1/\zeta\) maps \(G\) onto the component of \(\whC\setminus K_G\) containing \(\infty\). Since \(0\in G\), this component is the unbounded component \(\Omega_{K_G}\). As \(f\) is conformal from \(\D\) onto \(G\), it follows that \(g=h\circ f\circ (1/w)\) is conformal from \(\{w:|w|>1\}\) onto \(\Omega_{K_G}\).

Finally, using the expansion of \(f\) at the origin,
\[
f(\zeta)=f'(0)\zeta+O(\zeta^2)\qquad (\zeta\to0),
\]
we obtain
\[
f(1/w)=\frac{f'(0)}{w}+O\!\left(\frac1{w^2}\right)
\qquad (w\to\infty),
\]
and hence
\[
g(w)=\frac1{f(1/w)}
=\frac{w}{f'(0)}+O(1)
\qquad (w\to\infty).
\]
Therefore
\[
g'(\infty)=\frac1{f'(0)}.
\]

By definition of logarithmic capacity, the logarithmic capacity of \(K_G\) is exactly the coefficient \(g'(\infty)\) of its normalized exterior conformal map. Thus
\[
\capf(K_G)=g'(\infty)=\frac1{f'(0)}=\lambda_G(0).
\]
This completes the proof.
\end{proof}

The simply connected upper bound will rest on the following subadditivity theorem of Murai~\cite[Corollary~21]{Murai1992}.

\begin{theorem}[\cite{Murai1992}]\label{thm:murai}
If \(K_1,K_2\subset\C\) are bounded continua with \(K_1\cap K_2\neq\emptyset\), then
\[
\capf(K_1\cup K_2)\le\capf(K_1)+\capf(K_2).
\]
\end{theorem}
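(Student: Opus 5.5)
The statement is Murai's subadditivity theorem for intersecting continua. I would not expect to reprove it with the tools developed so far, and in the paper it may simply be quoted from \cite[Corollary~21]{Murai1992}. Still, here is the route I would try. Write \(c_i:=\capf(K_i)\) and \(K:=K_1\cup K_2\), and translate so that \(0\in K_1\cap K_2\). If some \(K_i\) is a singleton, it lies in the other continuum and the claim is trivial, so assume neither is. Then \(K\) is a continuum and \(\Omega_K\) is simply connected. By outer approximation I would also reduce to the case where each \(K_i\) is a finite union of analytic arcs; capacity is continuous along decreasing sequences of compact sets, so this step is routine.

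The main tool would be the Perron characterization of the Green function \(g_{\Omega_K}(\cdot,\infty)\). It has the expansion \(g_{\Omega_K}(z,\infty)=\log|z|-\log\capf(K)+o(1)\), and it dominates every subharmonic \(v\) on \(\Omega_K\) with \(v\le 0\) on \(\partial\Omega_K\) and \(v-\log|z|\) bounded above near \(\infty\). So it suffices to build one such \(v\) with
\[
v(z)=\log|z|-\log(c_1+c_2)+o(1)\qquad(z\to\infty).
\]
Comparing the constant terms at \(\infty\) then gives \(\capf(K)\le c_1+c_2\).

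The natural ingredients are the normalized exterior maps \(\Phi_i\) of \(K_i\). Each is defined and holomorphic on \(\Omega_{K_i}\supset\Omega_K\), with \(c_i\Phi_i(z)=z+O(1)\) as \(z\to\infty\). A cautionary check shows the difficulty. The product \(\Phi_1\Phi_2\) has modulus at least \(1\) on \(\partial\Omega_K\), and comparing it with \(2g_{\Omega_K}\) yields only the reverse estimate \(\capf(K)^2\ge c_1c_2\). The naive combination \(\log|c_1\Phi_1+c_2\Phi_2|-\log(c_1+c_2)\) has the right growth up to a harmless constant. However, on \(K_1\) the term \(|\Phi_2|\) may be huge, so the boundary condition fails.

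Constructing the competitor \(v\) is therefore the main obstacle, and connectedness together with \(0\in K_1\cap K_2\) must be used essentially. For disjoint compact sets subadditivity is false, for instance two far-apart small disks. My first attempt would be a subordination argument. I would write \(\Omega_K=\Omega_{K_1}\cap\Omega_{K_2}\) and pass through the maps \(1/\Phi_i\) into the punctured disk. I would then try to compare the harmonic measures of \(K_1\) and \(K_2\) seen from \(\infty\) in \(\Omega_K\). The aim is a splitting \(g_{\Omega_K}\ge \omega_1 g_1+\omega_2 g_2\), where \(g_i:=g_{\Omega_{K_i}}(\cdot,\infty)\) and \(\omega_1,\omega_2\) are suitable weights with \(\omega_1+\omega_2=1\). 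Concavity of \(\log\) would then be combined with a Beurling-type estimate that exploits the common point \(0\).

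If this construction resists, I would simply invoke Murai's result \cite[Corollary~21]{Murai1992}, whose proof uses finer analytic capacity and length estimates. That is the honest fallback for this step of the paper.
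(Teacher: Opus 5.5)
Your fallback is exactly the paper's treatment: the paper does not prove Theorem~\ref{thm:murai}. It quotes it from \cite[Corollary~21]{Murai1992} and uses it as a black box in the upper bound of Theorem~\ref{thm:sc-upper}. Read as a proof, your proposal therefore reduces to that citation, which is acceptable here. Your preliminary reductions and the Perron comparison are fine. One small correction: $\Omega_K$ is only the component of $\Omega_{K_1}\cap\Omega_{K_2}$ containing $\infty$, as two complementary closed semicircles show. However, the exploratory route you outline should not be kept as a plausible strategy, because its stated target is false.

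Since $\Omega_K\subset\Omega_{K_i}$, the maximum principle gives $g_{\Omega_K}\le g_i$ on $\Omega_K$. Hence, for any weights $\omega_1,\omega_2\ge 0$ with $\omega_1+\omega_2=1$ (constant or not),
\[
\omega_1g_1+\omega_2g_2\;\ge\;\min\{g_1,g_2\}\;\ge\;g_{\Omega_K}.
\]
So the splitting $g_{\Omega_K}\ge\omega_1g_1+\omega_2g_2$ could only hold with equality. With constant weights it would give $\capf(K)\le c_1^{\omega_1}c_2^{\omega_2}\le\max\{c_1,c_2\}$, which already fails for $[0,1]\cup[1,2]$ (Remark~\ref{rem:murai-not-strict}).

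Concavity of $\log$ can only enter through an entropy correction. Take $\omega_i=c_i/(c_1+c_2)$ and consider the harmonic function
\[
v=\omega_1g_1+\omega_2g_2+\omega_1\log\omega_1+\omega_2\log\omega_2 .
\]
It does satisfy $v=\log|z|-\log(c_1+c_2)+o(1)$, but the boundary condition $v\le 0$ on $\partial\Omega_K$ fails in general. For $K_1=[0,1]$ and $K_2=[1,2]$ one has $g_1(0)=0$ and $g_2(0)=2\log(1+\sqrt2)$. Hence $v(0)=\log\frac{1+\sqrt2}{2}>0$ at the boundary point $0$.

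More fundamentally, consider any equality case, such as this one. A competitor $v$ with the required expansion must coincide with $g_{\Omega_K}$ itself, because $g_{\Omega_K}-v$ is then a nonnegative superharmonic function vanishing at $\infty$. So the competitor must be exact in extremal configurations. Your proposal supplies no construction with that property, and this missing step is precisely the content of Murai's theorem.
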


\begin{remark}\label{rem:murai-not-strict}
The inequality in Theorem~\ref{thm:murai} can be an equality. Indeed, by Lemma~\ref{lem:interval-cap},
\[
\capf([0,2])=\frac12=\frac14+\frac14=\capf([0,1])+\capf([1,2]).
\]
\end{remark}

Finally, we identify the hyperbolic density on the relevant component of an intersection with the capacity of a union.

\begin{lemma}\label{lem:intersection-capacity}
Let \(G_1,G_2\subset\D\) be simply connected domains containing \(0\), and let \(U\) be the connected component of \(G_1\cap G_2\) that contains \(0\). For \(j=1,2\), let \(K_j:=K_{G_j}\) be the continuum from Lemma~\ref{lem:cap-identity}. Then
\[
\lambda_U(0)=\capf(K_1\cup K_2).
\]
\end{lemma}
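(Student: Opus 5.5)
The plan is to apply Lemma~\ref{lem:cap-identity} to \(U\) itself. To do that, we need three things: \(U\subset\D\), \(0\in U\), and \(U\) simply connected. After that, the work reduces to identifying \(K_U\) with \(K_1\cup K_2\), or at least showing the two sets have the same capacity.

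The first two points are immediate, since \(U\subset G_1\cap G_2\subset\D\) and \(U\) contains \(0\) by definition. Simple connectivity of \(U\) follows from Lemma~\ref{lem:component-intersection-sc}. Lemma~\ref{lem:cap-identity} then gives
\[
\lambda_U(0)=\capf(K_U),\qquad K_U=\Bigl\{\tfrac1\zeta:\zeta\in\whC\setminus U\Bigr\}.
\]

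Next we compare \(K_U\) with \(K_1\cup K_2\). Inversion turns complements into complements, so
\[
K_1\cup K_2=h\bigl(\whC\setminus(G_1\cap G_2)\bigr),\qquad h(\zeta)=1/\zeta .
\]
Since \(U\subset G_1\cap G_2\), this gives \(K_1\cup K_2\subset K_U\).

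The two sets differ exactly by the images under \(h\) of the other components of \(G_1\cap G_2\), i.e.\ those different from \(U\). Each such component \(V\) is disjoint from \(U\), and \(h(U)\) is the unbounded component \(\Omega\) of \(\whC\setminus(K_1\cup K_2)\): it is a connected component of \(\whC\setminus(K_1\cup K_2)=h(G_1\cap G_2)\) and it contains \(\infty=h(0)\). Hence each \(h(V)\) is a bounded complementary component of \(K_1\cup K_2\).

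It follows that \(K_U\) is \(K_1\cup K_2\) with some of its bounded complementary components filled in. Both sets therefore have the same unbounded complementary component \(\Omega\). This \(\Omega=h(U)\) is simply connected, so the exterior-map definition of capacity applies to both.

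Capacity as defined here depends only on the exterior map of \(\Omega\). Concretely, \(K_1\cup K_2\) is a bounded continuum, being a union of two continua that both contain \(\overline{\D}\), and it is not a singleton. Its normalized exterior conformal map is the map of \(\{|w|>1\}\) onto \(\Omega\) with positive derivative at \(\infty\), which is the same map as for \(K_U\). Therefore \(\capf(K_1\cup K_2)=\capf(K_U)=\lambda_U(0)\).

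The only delicate step is the identification \(h(U)=\Omega\) and the resulting observation that the two compacta share their unbounded complementary component. Everything else is direct from the earlier lemmas.
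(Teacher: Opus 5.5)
Your proof is correct and essentially matches the paper's. Both arguments rest on Lemma~\ref{lem:component-intersection-sc} and on identifying $h(U)$ with the unbounded component $\Omega_{K_1\cup K_2}$ of $\whC\setminus(K_1\cup K_2)$, so that the capacity, which depends only on the exterior map of that component, equals $\lambda_U(0)$. The only difference is cosmetic: you invoke Lemma~\ref{lem:cap-identity} for $U$ and pass through $K_U$, whose complement is exactly $h(U)$, whereas the paper re-runs the Riemann-map computation $g(w)=1/f(1/w)$ directly on $U$.
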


\begin{proof}
By Lemma~\ref{lem:component-intersection-sc}, the domain \(U\) is simply connected. Let \(f:\D\to U\) be the Riemann map normalized by
\[
f(0)=0,
\qquad
f'(0)>0.
\]
Set
\[
g(w):=\frac{1}{f(1/w)},\qquad |w|>1.
\]
As in the proof of Lemma~\ref{lem:cap-identity}, \(g\) is conformal from \(\{|w|>1\}\) onto \(h(U)\), where \(h(\zeta):=1/\zeta\), and
\[
g'(\infty)=\frac{1}{f'(0)}=\lambda_U(0).
\]
For \(j=1,2\), we have \(K_j=h(\whC\setminus G_j)\). Since \(h\) is a bijection of \(\whC\), it preserves complements, and hence
\[
\whC\setminus K_j
=\whC\setminus h(\whC\setminus G_j)
=h(G_j).
\]
Therefore
\[
h(G_1\cap G_2)=h(G_1)\cap h(G_2)
=(\whC\setminus K_1)\cap(\whC\setminus K_2)
=\whC\setminus (K_1\cup K_2).
\]
Since \(U\) is a connected component of \(G_1\cap G_2\) and \(h\) is a homeomorphism of \(\whC\), the set \(h(U)\) is a connected component of \(h(G_1\cap G_2)\). Moreover, \(0\in U\), so \(\infty=h(0)\in h(U)\). Hence \(h(U)\) is the connected component of \(\whC\setminus (K_1\cup K_2)\) containing \(\infty\), that is, the unbounded component \(\Omega_{K_1\cup K_2}\). Thus \(g\) is the normalized exterior conformal map of \(K_1\cup K_2\). Therefore
\[
\capf(K_1\cup K_2)=g'(\infty)=\lambda_U(0).
\qedhere\]
\end{proof}

\section{The general upper bound is infinite}\label{sec:general-upper}

The proof of Theorem~\ref{thm:gen-upper} is based on symmetric punctured disks. Each one-punctured disk has density at the origin of order \((a\log(1/a))^{-1}\), whereas the density on the intersection is bounded below by the density of the twice-punctured plane, which is of order \(a^{-1}\). The extra logarithmic factor forces the quotient to diverge.

\begin{proof}[Proof of Theorem~\ref{thm:gen-upper}]
By Lemma~\ref{lem:scaling}, it suffices to consider the case \(R=1\).

For \(a\in(0,1/2)\), set
\[
D_1^{(a)}:=\D\setminus\{a\},
\qquad
D_2^{(a)}:=\D\setminus\{-a\}.
\]
Then
\[
D_1^{(a)}\cap D_2^{(a)}=\D\setminus\{\pm a\},
\]
which is a hyperbolic domain containing the origin.

By Lemma~\ref{lem:one-puncture},
\[
\lambda_{D_1^{(a)}}(0)=\lambda_{D_2^{(a)}}(0)=\frac{1-a^2}{2a\log(1/a)}.
\]
Hence
\begin{equation}\label{eq:sum-one-puncture}
\lambda_{D_1^{(a)}}(0)+\lambda_{D_2^{(a)}}(0)
=\frac{1-a^2}{a\log(1/a)}.
\end{equation}

To estimate \(\lambda_{\D\setminus\{\pm a\}}(0)\) from below, consider the larger hyperbolic domain $\C\setminus\{\pm a\}$. Since $\D\setminus\{\pm a\}\subset \C\setminus\{\pm a\}$, Lemma~\ref{lem:mono} gives
\begin{equation}\label{eq:monotone-step}
\lambda_{\D\setminus\{\pm a\}}(0)\ge \lambda_{\C\setminus\{\pm a\}}(0).
\end{equation}
Let
\[
m:=\lambda_{\C\setminus\{\pm1\}}(0).
\]
Then \(0<m<\infty\), because \(\C\setminus\{\pm1\}\) is hyperbolic and the covering characterization \eqref{eq:cover-characterization} shows that the hyperbolic density of a hyperbolic domain is finite and strictly positive at every interior point. By Lemma~\ref{lem:scaling},
\begin{equation}\label{eq:scaling-plane}
\lambda_{\C\setminus\{\pm a\}}(0)=\frac{1}{a}\,\lambda_{\C\setminus\{\pm1\}}(0)=\frac{m}{a}.
\end{equation}
Combining \eqref{eq:monotone-step} and \eqref{eq:scaling-plane}, we obtain
\begin{equation}\label{eq:lower-bound-main}
\lambda_{\D\setminus\{\pm a\}}(0)\ge \frac{m}{a}.
\end{equation}

Finally, from \eqref{eq:sum-one-puncture} and \eqref{eq:lower-bound-main},
\[
\frac{\lambda_{D_1^{(a)}\cap D_2^{(a)}}(0)}{\lambda_{D_1^{(a)}}(0)+\lambda_{D_2^{(a)}}(0)}
=
\frac{\lambda_{\D\setminus\{\pm a\}}(0)}{\lambda_{D_1^{(a)}}(0)+\lambda_{D_2^{(a)}}(0)}
\ge
\frac{m/a}{(1-a^2)/(a\log(1/a))}
=
\frac{m\log(1/a)}{1-a^2}.
\]
As \(a\to0^+\), the right-hand side tends to \(+\infty\). Hence \(A_{\mathrm{gen}}(1)=+\infty\), and the general case follows again from Lemma~\ref{lem:scaling}.
\end{proof}

\section{The sharp upper bound in the simply connected case}\label{sec:sc-upper}

We first prove the upper estimate \(A_{\mathrm{sc}}(R)\le 1\), and then show that the constant \(1\) is sharp.

\begin{proof}[Proof of the upper bound in Theorem~\ref{thm:sc-upper}]
Fix \(R>0\), simply connected domains \(D_1,D_2\subset \D_R\), and a point \(z\in D_1\cap D_2\). Set
\[
E_j:=\frac1R D_j\subset\D,
\qquad
\zeta:=\frac{z}{R}\in E_1\cap E_2.
\]
Let
\[
\phi_\zeta(w):=\frac{w-\zeta}{1-\overline\zeta w},
\qquad w\in\D,
\]
so that \(\phi_\zeta\) is an automorphism of \(\D\) with \(\phi_\zeta(\zeta)=0\). Define
\[
\widetilde{G}_j:=\phi_\zeta(E_j),\qquad j=1,2.
\]
Then \(\widetilde{G}_1,\widetilde{G}_2\subset\D\) are simply connected and contain the origin.

Let \(U\) be the connected component of \(\widetilde{G}_1\cap \widetilde{G}_2\) that contains \(0\). For each \(j\in\{1,2\}\), let \(K_j:=K_{\widetilde{G}_j}\) be the continuum from Lemma~\ref{lem:cap-identity}. Since each \(K_j\) contains \(\overline\D\), we have \(K_1\cap K_2\neq\emptyset\). Therefore Theorem~\ref{thm:murai} gives
\[
\capf(K_1\cup K_2)\le\capf(K_1)+\capf(K_2).
\]
By Lemma~\ref{lem:intersection-capacity} and Lemma~\ref{lem:cap-identity}, this becomes
\[
\lambda_U(0)\le \lambda_{\widetilde{G}_1}(0)+\lambda_{\widetilde{G}_2}(0).
\]
Since \(U\) is the component of \(\widetilde{G}_1\cap \widetilde{G}_2\) containing \(0\), our convention implies
\[
\lambda_{\widetilde{G}_1\cap \widetilde{G}_2}(0)=\lambda_U(0).
\]
Using Lemma~\ref{lem:conf},
\[
\lambda_{E_j}(\zeta)=\lambda_{\widetilde{G}_j}(0)\,|\phi_\zeta'(\zeta)|,
\qquad
\lambda_{E_1\cap E_2}(\zeta)=\lambda_{\widetilde{G}_1\cap \widetilde{G}_2}(0)\,|\phi_\zeta'(\zeta)|.
\]
Hence
\[
\lambda_{E_1\cap E_2}(\zeta)\le\lambda_{E_1}(\zeta)+\lambda_{E_2}(\zeta).
\]
Finally, Lemma~\ref{lem:scaling} yields
\[
\lambda_{E_j}(\zeta)=R\lambda_{D_j}(z),
\qquad
\lambda_{E_1\cap E_2}(\zeta)=R\lambda_{D_1\cap D_2}(z).
\]
Dividing by \(R\), we obtain
\[
\lambda_{D_1\cap D_2}(z)\le\lambda_{D_1}(z)+\lambda_{D_2}(z).
\]
This proves \(A_{\mathrm{sc}}(R)\le1\).
\end{proof}

\begin{proof}[Proof of the sharpness in Theorem~\ref{thm:sc-upper}]
By the upper bound proved above, whenever \(D_1,D_2\in\mathcal S_R\), one has \(Q(D_1,D_2;z)\le 1\). Therefore, it remains to construct a family for which \[Q(D_1,D_2;z)\to 1.\]

By Lemma~\ref{lem:scaling}, the ratio is invariant under dilations, so it is enough to work with \(R=1\), that is, inside the unit disk \(\D\). Fix \(a\in(0,1)\) and define
\[
D_3^{(a)}:=\D\setminus [a,1),
\qquad
D_4^{(a)}:=\D\setminus (-1,-a].
\]
Both domains are simply connected and contain the origin.

We first compute \(\lambda_{D_3^{(a)}}(0)\). By Lemma~\ref{lem:cap-identity},
\[
\lambda_{D_3^{(a)}}(0)=\capf(K_3^{(a)}),
\]
where
\[
K_3^{(a)}
=
\left\{\frac1\zeta:\zeta\in \widehat{\C}\setminus D_3^{(a)}\right\}
=
\overline{\D}\cup [1,a^{-1}].
\]
Hence the unbounded component of \(\widehat{\C}\setminus K_3^{(a)}\) is
\[
\widetilde{\Omega}_3:=\{w:|w|>1\}\setminus [1,a^{-1}].
\]
By Lemma~\ref{lem:joukowski-slits}, the Joukowski map \(J(w)=w+1/w\) maps \(\widetilde{\Omega}_3\) conformally onto \(\widehat{\C}\setminus[-2,a+a^{-1}]\), and \(J'(\infty)=1\). Therefore Lemma~\ref{lem:cap-transform} and Lemma~\ref{lem:interval-cap} give
\[
\lambda_{D_3^{(a)}}(0)
=
\capf(K_3^{(a)})
=
\capf([-2,a+a^{-1}])
=
\frac{a+a^{-1}+2}{4}
=
\frac{(1+a)^2}{4a}.
\]
By symmetry,
\[
\lambda_{D_4^{(a)}}(0)=\frac{(1+a)^2}{4a}.
\]

Now consider the intersection:
\[
D_3^{(a)}\cap D_4^{(a)}
=
\D\setminus\bigl([a,1)\cup(-1,-a]\bigr).
\]
This domain is connected. By Lemma~\ref{lem:intersection-capacity},
\[
\lambda_{D_3^{(a)}\cap D_4^{(a)}}(0)
=
\capf(K_{34}^{(a)}),
\]
where
\[
K_{34}^{(a)}
=
\overline{\D}\cup[-a^{-1},-1]\cup[1,a^{-1}].
\]
Thus the unbounded component of \(\widehat{\C}\setminus K_{34}^{(a)}\) is
\[
\widetilde{\Omega}_{34}:=\{w:|w|>1\}\setminus\bigl([-a^{-1},-1]\cup[1,a^{-1}]\bigr).
\]
Again by Lemma~\ref{lem:joukowski-slits}, the map \(J(w)=w+1/w\) sends \(\widetilde{\Omega}_{34}\) conformally onto \(\widehat{\C}\setminus[-(a+a^{-1}),a+a^{-1}]\), with \(J'(\infty)=1\). Hence
\[
\lambda_{D_3^{(a)}\cap D_4^{(a)}}(0)
=
\capf(K_{34}^{(a)})
=
\capf([-(a+a^{-1}),a+a^{-1}])
=
\frac{a+a^{-1}}{2}
=
\frac{1+a^2}{2a}.
\]
Therefore
\[
Q(D_3^{(a)},D_4^{(a)};0)=\frac{\lambda_{D_3^{(a)}\cap D_4^{(a)}}(0)}
{\lambda_{D_3^{(a)}}(0)+\lambda_{D_4^{(a)}}(0)}
=
\frac{(1+a^2)/(2a)}{(1+a)^2/(2a)}
=
\frac{1+a^2}{(1+a)^2}.
\]
Letting \(a\to0^+\) yields \((1+a^2)/(1+a)^2 \longrightarrow 1\). Hence \(A_{\mathrm{sc}}(1)\ge 1\). Combined with the upper bound, this yields
\[
A_{\mathrm{sc}}(1)=1.
\]
By scaling invariance, it follows that \(A_{\mathrm{sc}}(R)=1\) for every \(R>0\).
\end{proof}

\section{Sharp lower bounds and equality cases}\label{sec:lower-bounds}

We now turn to the lower extremal constants and the characterization of equality in the lower-bound inequality.

\begin{proof}[Proof of Theorem~\ref{thm:lower}]
Let \(D_1,D_2\in\mathcal H_R\), let \(z\in D_1\cap D_2\), and let \(U\) be the
connected component of \(D_1\cap D_2\) containing \(z\). By definition,
\[
\lambda_{D_1\cap D_2}(z)=\lambda_U(z).
\]
Since \(U\subset D_1\) and \(U\subset D_2\), Lemma~\ref{lem:mono} yields
\[
\lambda_{D_1\cap D_2}(z)=\lambda_U(z)\ge \lambda_{D_1}(z),
\qquad
\lambda_{D_1\cap D_2}(z)=\lambda_U(z)\ge \lambda_{D_2}(z).
\]
Hence
\[
\lambda_{D_1\cap D_2}(z)
\ge \max\{\lambda_{D_1}(z),\lambda_{D_2}(z)\}
\ge \frac12\bigl(\lambda_{D_1}(z)+\lambda_{D_2}(z)\bigr).
\]
Therefore
\[
B_{\mathrm{gen}}(R)\ge \frac12.
\]
Since \(\mathcal S_R\subset \mathcal H_R\), we also have
\[
B_{\mathrm{gen}}(R)\le B_{\mathrm{sc}}(R).
\]

On the other hand, if \(D_1=D_2\), then
\[
\lambda_{D_1\cap D_2}(z)=\lambda_{D_1}(z)=\lambda_{D_2}(z)
\]
for every \(z\in D_1\), and hence
\[
Q(D_1,D_2;z)=\frac12.
\]
It follows that
\[
B_{\mathrm{sc}}(R)\le \frac12.
\]
Combining the above inequalities, we obtain
\[
\frac12\le B_{\mathrm{gen}}(R)\le B_{\mathrm{sc}}(R)\le \frac12.
\]
Therefore
\[
B_{\mathrm{gen}}(R)=B_{\mathrm{sc}}(R)=\frac12.
\]

It remains to characterize the equality case. Suppose that
\[
Q(D_1,D_2;z)=\frac12.
\]
Then equality must hold throughout the chain
\[
\lambda_{D_1\cap D_2}(z)=\lambda_U(z)
\ge \max\{\lambda_{D_1}(z),\lambda_{D_2}(z)\}
\ge \frac12\bigl(\lambda_{D_1}(z)+\lambda_{D_2}(z)\bigr).
\]
Hence
\[
\lambda_U(z)=\lambda_{D_1}(z)=\lambda_{D_2}(z).
\]
Since \(U\subset D_1\) and \(U\subset D_2\), Lemma~\ref{lem:mono} implies that
\(U=D_1\) and \(U=D_2\). Thus \(D_1=D_2\).

Conversely, if \(D_1=D_2\), then, as observed above,
\[
Q(D_1,D_2;z)=\frac12
\]
for every \(z\in D_1\). This completes the proof.
\end{proof}

\section{Concluding remarks and open problems}\label{sec:conclusion}

In this paper we have resolved, with one exception, the questions raised by Fuchs and discussed by Hayman and Lingham in \cite{HaymanLingham2019} concerning the four extremal constants
\[
A_{\mathrm{gen}}(R),\quad A_{\mathrm{sc}}(R),\quad B_{\mathrm{gen}}(R),\quad B_{\mathrm{sc}}(R).
\]
The only remaining open problem concerns the endpoint case in the simply connected upper estimate. Namely, although \(A_{\mathrm{sc}}(R)=1\), it is unknown whether this endpoint value is attained. Equivalently, it is unknown whether equality can occur in
\[
\lambda_{D_1\cap D_2}(z)\le \lambda_{D_1}(z)+\lambda_{D_2}(z)
\]
for some simply connected domains \(D_1,D_2\subset \D_R\) and some point \(z\in D_1\cap D_2\).

In fact, we are led to conjecture a stronger four-domain additive inequality, reminiscent of inequality~\eqref{eq:previous_work_v1}.

\begin{conjecture}\label{conj:four-domain-additive}
Let \(D_1,D_2\subsetneq \C\) be simply connected domains, let
\(z\in D_1\cap D_2\), and assume that \(D_1\cup D_2\) is hyperbolic. Then
\[
\lambda_{D_1\cap D_2}(z)+\lambda_{D_1\cup D_2}(z)
\le
\lambda_{D_1}(z)+\lambda_{D_2}(z).
\]
\end{conjecture}
If Conjecture~\ref{conj:four-domain-additive} is true, then the supremum \(A_{\mathrm{sc}}(R)=1\)
in Theorem~\ref{thm:sc-upper} cannot be attained. Indeed, suppose that for some simply connected
domains \(D_1,D_2\subset \D_R\) and some \(z\in D_1\cap D_2\) one had
\[
\lambda_{D_1\cap D_2}(z)=\lambda_{D_1}(z)+\lambda_{D_2}(z).
\]
Then Conjecture~\ref{conj:four-domain-additive} would imply
\[
\lambda_{D_1\cup D_2}(z)
\le
\lambda_{D_1}(z)+\lambda_{D_2}(z)-\lambda_{D_1\cap D_2}(z)
=0,
\]
which is impossible, since the hyperbolic density is strictly positive on every hyperbolic domain.
Thus no extremal configuration can realize the supremum \(A_{\mathrm{sc}}(R)=1\).

In this sense, Conjecture~\ref{conj:four-domain-additive} would explain why the constant \(1\) is
sharp in the simply connected upper-bound problem, yet unattainable.

\begin{remark}\label{rem:four-domain-additive-equality}
Equality can occur in Conjecture~\ref{conj:four-domain-additive}. Indeed, for \(a\in(0,1)\), let
\[
D_3^{(a)}:=\D\setminus [a,1),
\qquad
D_4^{(a)}:=\D\setminus (-1,-a].
\]
Then \(D_3^{(a)}\) and \(D_4^{(a)}\) are simply connected, \(0\in D_3^{(a)}\cap D_4^{(a)}\), and \(D_3^{(a)}\cup D_4^{(a)}=\D\). Moreover, by the proof of Theorem~\ref{thm:sc-upper}, we know that
\[
\lambda_{D_3^{(a)}}(0)=\lambda_{D_4^{(a)}}(0)=\frac{(1+a)^2}{4a},
\qquad
\lambda_{D_3^{(a)}\cap D_4^{(a)}}(0)=\frac{1+a^2}{2a},
\qquad
\lambda_{D_3^{(a)}\cup D_4^{(a)}}(0)=\lambda_{\D}(0)=1.
\]
Hence
\[
\lambda_{D_3^{(a)}\cap D_4^{(a)}}(0)+\lambda_{D_3^{(a)}\cup D_4^{(a)}}(0)
=
\lambda_{D_3^{(a)}}(0)+\lambda_{D_4^{(a)}}(0).
\]
Thus equality is possible in Conjecture~\ref{conj:four-domain-additive}.
\end{remark}

\end{document}